\titleformat*{\paragraph}{\bfseries}
\titlespacing{\paragraph}{0pt}{*1}{*1}
\setlist{noitemsep,nolistsep}
\setlist[enumerate]{label=\textit{\alph*})}
\setlist[itemize]{leftmargin = 1pc}
\g@addto@macro\bfseries{\boldmath}
\long\def\ignore#1{}
\newtheorem{proposition}{Proposition}
\newtheorem{theorem}{Theorem}
\newtheorem{lemma}{Lemma}
\theoremstyle{definition}
\newtheorem*{definition}{Definition}
\colorlet{maxcolor}{yellow!30}
\colorlet{avicolor}{blue!10}
\colorlet{eiloncolor}{blue!20}
\newcommand{\MAX}[1]{{\sethlcolor{maxcolor}\hl{#1}}}
\newcommand{\AVI}[1]{{\sethlcolor{avicolor}\hl{#1}}}
\DeclarePairedDelimiter\splitted{\langle}{\rangle}
\let\le\leqslant
\let\ge\geqslant
\let\leq\leqslant
\let\geq\geqslant
\def\a{\alpha}
\def\b{\beta}
\def\ep{\varepsilon}
\def\U{\mathrm{U}}
\newcommand{\N}[1]{N^\mathrm{#1\kern-1.5pt}}
\newcommand{\calJ}{{\cal J}}
\newcommand{\rednote}[1]{\textcolor{red}{#1}}
\begin{document}
\title{Bayesian Dissuasion with Bandit Exploration%
\thanks{Lehrer acknowledges the support of grants ISF 591/21 and DFG
KA 5609/1-1.
Solan acknowledges the support of the ISF grant 211/22.}}
\author{Massimo D'Antoni\footnote{Department of Economics and Statistics, University of Siena, Siena, Italy, E-mail: dantoni@unisi.it.} \and
Ehud Lehrer\footnote{Department of Economics, Durham University, Durham, DH13LB, UK,  E-mail: ehud.m.lehrer@durham.ac.uk.} \and
Eilon Solan\footnote{School of Mathematical Sciences, Tel-Aviv University, Tel-Aviv, Israel, 6997800, E-mail: eilons@tauex.tau.ac.il.} \and
and Avraham Tabbach\footnote{Faculty of Law, Tel-Aviv University, Tel-Aviv, Israel, 6997800, E-mail: adtabbac@tauex.tau.ac.il.}}
\date{\today}

\begin{titlepage}

%\thipagestyle{empty}
\maketitle
\thispagestyle{empty}

\begin{abstract}
  \noindent
  We investigate a two-period Bayesian persuasion game, where the receiver faces a decision, akin to a one-armed bandit problem: to undertake an action, gaining noisy information and a corresponding  positive or negative payoff, or to refrain. The sender's objective is to dissuade the receiver from taking action by furnishing information about the payoff.
  Our findings describe the optimal strategy for the amount and timing of information disclosure. 
  In scenarios where the sender possesses knowledge of the receiver's first-period action or observes a noisy public signal correlated with it, the optimal strategy entails revealing information in the second period. If this alone proves to be insufficient to dissuade the receiver from acting, 
  supplementary information is provided in the first period. 
  In scenarios where information must be provided without conditioning on the receiver's first-period action, 
  the optimal strategy entails revealing information exclusively in the first period. 
%We investigate two-period Bayesian persuasion games involving a receiver who acquires noisy information when taking actions \MAX{that give him an uncertain payoff (i.e., information acquisition can be described as a one-armed bandit problem) and harm the sender}. We explore three scenarios: (a) the sender can reveal information in period 2 based on the receiver's period 1 behavior, (b) the sender can utilize the public signal for period 2 disclosure, and (c) the sender has no information to base her disclosure on. Our results indicate that in scenario (c), the optimal strategy for the sender is to exclusively reveal information during period 1. However, in the other two scenarios, the optimal strategy involves period 2 information disclosure, supplemented with period 1 information when necessary.
  
\medskip
\begin{description}
\item[Keywords:] Persuasion games, information disclosure,  behavior-based strategy, one-armed bandit, explore-exploit, law enforcement, tax evasion, enforcement communication.
\item[JEL classification:] D31; D62; H23; K14; K42
\end{description}
\end{abstract}

\end{titlepage}
% \onehalfspacing
% \doublespacing
%\let\footnote=\endnote
%%%%%%%%%%%%%%%%%%%%%%%%%%%%%%

\section{Introduction}
%%%%%%%%%%%%%%%%%%%%%%%%%%%%%%

In the Bayesian persuasion model, an informed sender communicates with an uninformed receiver to influence the receiver's choices. The key question is how much information should the sender disclose to maximize her payoff. In a multi-period game, timing is also crucial -- the sender needs to determine when to reveal the information. 

The classical model of \citet{KG} and its subsequent variations assumed that the sender is the only source of information for the receiver,
see the survey \citet{kamenica2019bayesian}. 
Recent papers have explored models where the receiver has an additional source of information, independent of the play.

In certain dynamic interactions, the receiver's choice affects not only the payoffs, but also the amount of information he acquires about the state of nature. For instance, in bandit problems, an agent who pulls a risky arm receives not only a stochastic payoff, but also information about the machine's probability of success, leading to an explore-exploit trade-off. This paper seeks to understand how this characteristic affects the timing and amount of information the sender should reveal to the receiver if her goal is to dissuade the receiver from taking certain actions.

In our model, the receiver can \emph{act} or \emph{refrain}. 
When the receiver acts, he obtains a gain or incurs a loss, depending on the state of nature. The state in which acting implies a gain is denoted by $G$ and the state under which it implies a loss is denoted by $L$.  If the receiver refrains, he receives neither a payoff nor information.  
If the receiver acts, a noisy signal is generated, whose distribution depends on the state of nature. This signal plays two roles: it determines the receiver's payoff and it provides information to the receiver about the true state of nature. The sender suffers a cost whenever the receiver acts. Hence, the sender's objective is to minimize the number of times the receiver acts.

An important motivation for our study is law enforcement, and, in particular,  tax evasion.\footnote{Our study applies naturally to any one-armed bandit problem where there is an additional player who has superior knowledge about the state of nature and who aims at dissuading the decision maker from acting, see examples in Section~\ref{sec:applications}.}
The sender is an enforcement agency and the receiver is an offender. The state of nature represents the amount of resources deployed for enforcement (or enforcement efficiency), which determines the probability of punishment. 
The state $G$, where the offender gains, corresponds to few resources (low efficiency) and state $L$, where the offender loses, corresponds to many resources (high efficiency).  
The offender has prior beliefs about the state of nature, and he can learn more about it by offending (i.e., selecting act) and observing whether he is detected or not (i.e., the signal). 
In other words, acting also serves as an exploration mechanism. The law enforcement agency, on its part, contemplates whether to reveal information about its resources (efficiency) to deter wrongdoing, or whether to remain silent leaving the offender in the dark. 

We investigate a two-period model.  The receiver has the incentive to act if he assigns a high probability to state $G$ and to refrain if he assigns a low probability to this state.  In the intermediate range of beliefs, it might still be beneficial for the receiver to act in period 1 despite the associated cost, because the information learned by this action in period 1 can generate a gain in period 2.

Since exploration hurts the sender, she may attempt to discourage it by revealing information in period 1 or by offering to reveal information in period 2.\footnote{In both cases, we mean revealing information before the receiver decides whether to act or refrain.}  Revealing information in period 2 may or may not depend on the receiver's action in period 1, or more generally on the information the sender possesses about the receiver's action in period 1. 

We investigate three distinct information provision structures that differ in information design and equilibrium outcomes: 
\begin{itemize}
\item \emph{Unconditional Information Provision}: the sender observes neither the receiver's actions nor his signals. In this scenario, any information revealed in period 2 is disclosed regardless of the action taken in, or on the outcome of, period 1. 
\item  \emph{Action-Based Information Provision}:
the sender observes the receiver's action in period 1 and can condition the revelation of information in period 2 on these actions. 
\item \emph{Signal-Based Information Provision}:
the sender observes the signal generated by the receiver's action, but not necessarily the action itself, and can condition the revelation of information in period 2 on this signal.  
%Under these conditions, the receiver could undertake an action that produces no observable signal. In such an instance, the receiver's action remains undetected.
\end{itemize}
Except when the prior belief of the state $G$ is below a certain threshold level and it is optimal for the sender to disclose no information, the optimal information structure is different for the three models. 
\medskip

\paragraph{Efficiency.}
The aforementioned information structures are evaluated by comparing their outcomes to the benchmark case in which no information is provided.  The number of times the receiver acts in this case is an \textit{upper} bound of the number of times he acts in any equilibrium when the sender can provide information.  The receiver's payoff in this benchmark case is a \textit{lower} bound of his equilibrium payoff when the sender can provide information.  Normalizing the payoff in state $G$ (resp.,~$L$) to 1 (resp.,~$-c$ for $c > 0$), the receiver's payoff in the benchmark case is also a \textit{lower} bound on the number of times the receiver acts in any equilibrium, and hence it is also a \textit{lower} bound on the sender's loss in equilibrium.

We show that in the three scenarios of information provision we study, the sender can reduce, to her own benefit, the number of times the receiver acts relative to the benchmark case. In essence, this is because the receiver, without information, may take actions that are detrimental to him in some states of the nature.
%\color{red} in the short run, to gain information that is useful in the future. \color{black}  
The most effective way to do so is when information revelation can be based on the receiver's actions. In this variation, the sender can, in many cases, attain the lower bound of her loss.  Under Signal-Based Information Provision, as long as the receiver's action and his signal are sufficiently correlated, the equilibrium strategies and outcomes are no different than under Action-Based Information Provision, so in many cases the sender can still achieve the lower bound of her loss. On the other hand, under Unconditional Information Provision, the sender cannot achieve the lower bound of her loss, and, not surprisingly, this one is the worst for her among the three variations under consideration.

Intuitively, the aim of information revelation when the sender can base her strategy on actions or signals is two-fold. First, the sender wants to dissuade the receiver from acting in period 1. 
This is achieved by conditional information revelation, which acts as a `reward' or incentive for the receiver to refrain in period 1. Second, the sender seeks to manipulate the receiver's belief in a way that maximizes her own payoff, similar to other persuasion models. Conditioning information provision on behavior allows the sender, in some instances, to tailor the information quantity such that the receiver obtains no benefit compared to the no-information scenario. This enables the sender to achieve the desired lower-bound outcome. However, when information is delivered unconditionally (in either period 1 or 2), it enhances the receiver's potential payoff compared to the no-information case. This is because the receiver can combine the provided information with his own signals to gain an advantage. Consequently, the sender cannot achieve the lower bound in this scenario.

\paragraph{Timing.}

In the Unconditional Information Provision model, the optimal strategy for the sender is to send a single message in period 1. This message either induces the receiver's prior belief to the maximum level that leads him to refrain in both periods, or it instills full certainty that the state of nature is $G$, prompting action in both periods. Interestingly, promising to disclose information in period 2 as an incentive to deter the receiver from acting in period 1 is inefficient. 
This is because the Unconditional Information Provision cannot serve as a reward for either acting or refraining. 
However, promising to disclose information in period 2 can dissuade the receiver from acting in period 1 when acting in period 1 entails costs. To further explain the inefficiency of disclosing information as an incentive, consider a scenario where acting in period 1 entails costs but provides valuable information for period 2 that may still justify it. A naive approach might suggest deterring action in period 1 by disclosing equivalent information in period 2. However, this would not suffice. The unconditional nature of the disclosure lets the receiver combine the disclosed information with the information gained from acting in period 1, granting him an advantage. To dissuade the receiver from acting in period 1 would necessitate an excessive amount of promised information in period 2 compared to what could be attained by providing information solely in period 1. 

In contrast, under Action-Based and Signal-Based Information Provision, the optimal strategy is to postpone information revelation to period 2, and to condition it on the receiver's action or on the public signal in period 1. Thus, in these scenarios, the information serves as a reward for refraining in period 1. When the prior belief is moderate, this reward is sufficient to achieve its goal. When the receiver has optimistic beliefs about the true state of nature, the reward is insufficient to ensure no action in period 1. In such instances, supplementary information must be disclosed at the outset of period 1. This early provided information guarantees that the information promised in period 2 can serve as an effective carrot.

%In the interaction between the sender and the receiver, information may serve as prize?. This means that when the receiver refrains in period 1, the sender can reward him by revealing information before period 2. In the Action-Based and the Signal-Based Information Provision scenarios information operates as a carrot: it is only disclosed in period 2 if the receiver's action, or the signal generated by the receiver's action, is not observed in period 1.

%However, the information promised by the sender in period 2 may be insufficient to influence the receiver's behavior, particularly when his optimistic beliefs about the true state would induce him to act. In such instances, supplementary information must be disclosed at the outset of period 1. This early provided information ensures that the information promised in period 2 can still be used as a carrot.

%\paragraph{Amount of information.}
%The amount of information needed to influence a receiver's behavior varies depending on the scenario. This is also true for the Action-Based and Signal-Based Information Provision models, where information is used to discourage the receiver from taking action. If the sender can only condition information disclosure on the public signal, more information is required.

\paragraph{The ``clear-message'' strategy.}
A feature common to all scenarios is that, in equilibrium, the sender utilizes a ``clear-message'' strategy. In such a strategy, the sender reveals that the state of nature is $G$ with positive probability, and otherwise maintains a sufficiently high level of uncertainty about the true state of nature. 
To this, the receiver responds with a ``compliance'' strategy, where he takes the harmful action only when the sender reveals that the state of nature is $G$. 

The reason why efficient information revelation features these equilibrium strategies is as follows. In our setup, the sender can ``pay'' the receiver with information, which the receiver utilizes to optimize his actions and payoffs. 
The receiver obtains a maximal payoff, normalized to 1, when he acts if the state of nature is $G$. When he is not completely sure that the state is $G$, and still takes the harmful action, his payoff is less than the maximal possible payoff, but the sender suffers the same cost. 
This means that when the receiver is unsure that the state is $G$ and still takes the harmful action, the sender, by revealing information with positive probability, can keep the payoff to the receiver unchanged, while at the same time reducing the harm to herself. Thus, it is part of the equilibrium strategy that the receiver acts only when he is certain that the state of nature is $G$.\footnote{Compare with \citet{ely2020moving,solan2021dynamic,Zhao}.} 

An interesting consequence of this equilibrium property is that the receiver does not utilize the exploration opportunities offered by his harmful action.  In equilibrium, the receiver only takes the informative action when he already knows that the actual state is $G$, rendering his actions void of any informational gain. Throughout the game, the receiver solely relies on the information supplied by the sender. Simply put, in equilibrium, the sender effectively eradicates the receiver's ability to gain private knowledge through her strategically optimal information provision.

\paragraph{Our Contribution.}
This paper presents two significant contributions to the literature. Firstly, it introduces a Bayesian persuasion model where the receiver can take actions that influence his own information. This unique feature in our model is, to the best of our knowledge, missing in the literature. Previous dynamic models have explored scenarios where the receiver receives exogenous information or where his actions affect the beliefs of subsequent receivers. Our model is the first to examine a situation where the receiver's actions directly impact his own beliefs, and thereby, his subsequent behavior.

Our second contribution is in providing new insights regarding optimal information revelation. We show the importance of the timing of information disclosure as a function on what the information disclosure can or cannot be based on. When the sender observes the receiver's actions or the public signal, information is largely backloading. That is, the sender delays information provision as much as possible.  Moreover, when backloading is insufficient to deter acting in period 1, supplementary information must be revealed in period 1. To the best of our knowledge, this is the first time such a phenomenon has been observed in the literature. On the other hand, when the sender lacks visibility into the receiver's actions or the public signal, information disclosure is solely frontloading, with the sender revealing information at the earliest possible opportunity.

\paragraph{Related Literature.}

%The literature on Bayesian persuasion studies the effect of outside information on the strategic considerations of the receiver. 
The literature on Bayesian persuasion studies how an informed player can influence the beliefs and decisions of another player by strategically revealing information to maximize her own payoffs.
Numerous studies analyze static models.\footnote{See, e.g., \cite{MM,arieli2019private,Orlov,%
  Liao,Montes,Ui:b,ui.22:wp}.} There is also a growing literature that focus on dynamic models. These models allow the sender to provide partial information along the play, potentially  conditional on the history of the game.\footnote{See, e.g.,\cite{RSV, kremer.mansour.ea.14,
  ganglmair2014conversation, horner2016selling, ely.17, renault2017optimal,
  halac2017contests, che2018recommender, honryo, ely2020moving, BRV, Su,
  lehrer,Zhao}.} In some of these studies 
the receiver can obtain additional information beyond what the sender provides, for instance, from independent sources. 
The classes of dynamic games studied in the literature vary considerably. This makes comparisons among papers a hard task. 
We focus on the papers most relevant to our paper, highlighting the key differences. While these studies are similar to ours in certain aspects, for example, in utilizing information as an incentive tool, our paper offers a distinct element: the receiver can actively gather information through his actions, which he can then utilize to his benefit. This distinctive element, giving strategic exploratory quality to actions, distinguishes our model from the existing literature. A second difference between our paper and the literature concerns the research question.
While the literature assumes that the sender observes the receiver's actions, and therefore can condition information disclosure on these actions,
%and signals, 
we study how varying the monitoring technology of the sender affects the resulting equilibrium. 
%\color{red}

%\color{blue}
%Alternative: The paper is part of the literature on Bayesian persuasion, pioneered by Kamenica and Gentzkow (2011), and recently surveyed by Kamenica (2019). The three most closely related papers are XXX, YYY, ZZZ.  Ely and Szydlowski (2020), and \citet{Orlov}. In common with our paper, these papers study the optimal disclosure of information in dynamic games and show how the disclosure of information can be used as an incentive tool. The observation that information can be used to incentivize agents dates back to the literature on repeated games with incomplete information, e.g., Aumann et al. (1995).

%\citet{ely.17} and \citet{renault2017optimal},
%explore the optimality of the greedy strategy for the sender,
%when the state evolves according to a Markov chain.

%VERSION 1. \citet{BRV} considers a model with finite horizon, and studies the effect of an exogenous source of information on equilibrium behavior.

\citet{Zhao} explores a repeated game scenario wherein the sender's objective is to maximize the (discounted) frequency with which the receiver opts for the sender's preferred action. The optimal strategy for the sender involves revealing information precisely when the sender's preferred action does not align with the receiver's best response in the static game. In line with our findings, \citet{Zhao} demonstrates that by conditioning information disclosure on specific actions, the sender uses information as an incentive to encourage the receiver to favor the desired action. Notably, our study differs in that the receiver lacks observation of his own stage payoff and relies solely on information provided by the sender. It implies that in persuading the receiver to adopt a particular action, the sender should consider only the receiver's payoff, without factoring in additional information the receiver may acquire through his chosen action, as in our model. 
%VERSION 2. 

\citet{BRV} present a stopping model wherein a sender endeavors to persuade a receiver to accept an offer before a specified deadline. The receiver's decision to accept the offer hinges on the uncertainty of the prevailing state of the world. While the receiver has the option to wait and accumulate information, doing so incurs a cost attributed to the time value of money. Information, crucial to decision-making, can originate either from the sender, often denoted as inside information, or from an external source, referred to as outside information.
A key distinction between their model and ours lies in the nature of outside information, which, in their framework, is beyond the control of both the sender and receiver. In contrast, our model allows the receiver to acquire information not only from the sender, but also under his control, exhibiting a new aspect of the dynamics of information acquisition and strategic interaction. In \citet{BRV} the outside information is restricted to be decisive:  either revealing with some probability that the state is bad, or with some probability that the state is good. In our model, distinguished by its repeated game nature as opposed to a stopping scenario, the outside source of information is never decisive. The absence of conclusive outside information in our setting adds a layer of complexity to the interplay between beliefs, information acquisition, and strategic decisions.

%\citet{ely2020moving} studies a model in which the sender, possessing knowledge of the task's difficulty level, seeks to compel the receiver to undertake the task, even if the receiver would have chosen otherwise had she been aware of the task's difficulty. 
%The optimal mechanism features moving goalposts: an initial disclosure makes the receiver sufficiently optimistic that the task is easy. 
%If the task is indeed difficult, the receiver is told this only after working long enough to put the difficult task within reach. 
%The receiver then completes the difficult task even though he would have never chosen to do that at the outset. 
%One difference between our paper and \citet{ely2020moving} is that in the latter the sender is the receiver's only source of information, while in our paper, the receiver's action generates information as well. 
%In addition to the two key differences mentioned above, in our paper, the sender aims at dissuading the receiver from acting, while in \citet{ely2020moving} she aims at inducing him to work as long as possible. 

%They find that the optimal sender's strategy has the flavor of goalposts: if the task is difficult, the receiver is told this only after working long enough to put the task within reach.  The optimal sender's strategy in our model shares similarities with this optimal strategy: whenever the information revealed by the sender does not reveal that the state is favorable to the receiver, it makes the receiver indifferent between acting and refraining. 

\citet{ely2020moving} study the role of information as an incentive device in a dynamic moral hazard framework. An agent works on a task of uncertain difficulty. The principal knows the task difficulty and can provide information over time in order to induce the agent to work as long as possible. 
The main difference between this paper and ours is that in the former the sender is the only source of information, while in ours, the agent's action generates information as well. In addition, in our paper, the sender aims at dissuading the agent, while in their paper she aims at inducing him to work as long as possible. This distinction generates significant behavioral differences. Moreover, the model of \citet{ely2020moving} is a quitting game, while our model is a repeated game. Nevertheless, our paper shares similar features, in particular that information can function as an incentive device.

\cite{Orlov} study a continuous-time quitting game (e.g., exercising a real option), where the payoffs depend on two random parameters: the realization of a geometric Brownian motion which is publicly known, and a binary parameter known only to the sender. Two types of misalignment between the sender and the receiver are considered. In their model, the receiver obtain information about the binary parameter only through the sender. 

A recent paper by \citet{BestQuigley} shares with our paper the idea that an action of the receiver produces information that is used in the longer term. 
However, in this paper, there is no long-term interaction between the sender and the receiver, as the sender meets a sequence of short-run receivers, and questions such as the optimal timing of information disclosure to a receiver and the exploration/exploitation issues are not addressed.
Moreover, the focus of \citet{BestQuigley} is markedly different, as it aims to analyze how reputation can substitute for exogenous commitment and under what communication/monitoring technology the resulting equilibrium can replicate the outcome of (static) Bayesian persuasion, where exogenous commitment is instead assumed.  

\color{black}

% To our knowledge, the only paper that allows the receiver to obtain information that depends on the receiver's behavior is \citet{BestQuigley}, which studies a long-lived sender who faces a sequence of short-lived receivers.  The latter observe some record of the feedback on past accuracy of the sender.  \cite{BestQuigley} assume that the state is selected randomly in every period, independently of past play, and that one action of the receivers generates no signal, while the other actions generate signals that depend only on the state.  It then studies the effect on the efficiency of equilibria of changing the monitoring technology on the past record.

\color{black}

Our leading example focuses on the interaction between law enforcement agencies and offenders. Several studies, including \cite{Hernandez} and the references therein, explore Bayesian persuasion of  this topic in static models. For instance, \cite{Lazear} demonstrates in one-shot interactions that providing information to potential offenders can increase compliance levels.
Similarly, \cite{LS} show that focusing law enforcement resources on a subgroup of offenders can increase deterrence. 
In this context, it is well established that future compliance is affected by punishment \citep[see, e.g.,][]{Dusek}.

\color{black}
\paragraph{The structure of the paper.} The paper is structured as follows. Section \ref{sec:set up} describes the model. 
Section \ref{sec:applications} discusses several applications, in particular, law enforcement and tax evasion. Section \ref{sec: no information} studies the benchmark case where the sender provides no information.
Section \ref{sec: uncoditional} examines the case where the sender provides information unconditionally.  Sections \ref{sec: action-based} and \ref{sec: signal-based} present the most intriguing results from a behavioral standpoint. In Section~\ref{sec: action-based}, the sender can base her messages to the receiver on the choice made by the receiver in period 1. In Section \ref{sec: signal-based}, the sender can base her messages on the signal generated by the receiver's action; in the law enforcement setup, the signal represents whether the offender was detected and punished for her offense or not.  All proofs are relegated to the appendix.
%%%%%%%%%%%%%%%%%%%%%%%%%%%%%%
\section{The Model}
\label{sec:set up}
%%%%%%%%%%%%%%%%%%%%%%%%%%%%%%

The game under consideration involves two players: a sender (``she") and a receiver (``he"). It is a two-period, incomplete-information game with two possible states of nature: $G$ and $L$.\footnote{$G$ stands for the state of nature where the receiver gains, $L$ for the state where he loses.} The prior probability of state $G$ is $q\in (0,1)$.

At the outset of the game, the sender is informed of the realized state, while the receiver is not. This incomplete information model sets the stage for strategic communication and persuasion, with the sender aiming to influence the receiver's decision making.

In each of the two periods, which we refer to as period 1 and period 2, the receiver can decide to `act' (A) or `refrain' (R). Acting is costly to the sender and it may imply either a gain or a loss to the receiver depending on the state of nature, while refraining implies zero payoff to both players.

Before the receiver decides whether to act or to refrain, the sender can send a message to the receiver from a set $M$ that contains at least two messages. Moreover, by acting, the receiver may obtain a (noisy) signal on the state of nature. Specifically, the signal is either positive ($+$) or negative ($-$). When the state is $G$ (resp., $L$), the probability of a positive signal is $\a_G$ (resp.,~$\a_L$), while the probability of a negative signal is $1-\a_G$ (resp.,~$1-\a_L$). We assume that $\a_G > \a_L$. That is, the probability of obtaining the positive signal is higher at the state $G$ than at the state $L$. In contrast, when choosing action R, the receiver obtains neither a stage payoff, nor an informative signal. Formally, we assume that, when choosing R, the signal is positive with probability one in both states.

The receiver's payoff when he acts depends on the signal realization. 
Namely, a positive signal implies a positive payoff, while a negative signal a negative payoff, hence the expected payoff is larger at $G$ than at $L$. We normalize payoffs so that the receiver's expected stage payoff is $1$ at $G$ and $-c$ at $L$, for some $c>0$.

Table \ref{table1} summarizes the state-dependent probability of signals and the expected payoffs of the receiver when taking actions A or R.

\begin{table}[ht]
  \centering
  \begin{tblr}{width=.9\linewidth,
    colspec={Q[c]|Q[1,c]Q[1,c]Q[1.75,c]|Q[1,c]Q[1,c]Q[1.75,c]|},
    row{3-5}={mode=math}
  }
& \SetCell[c=3]{c}Act (A) &&& \SetCell[c=3]{c}Refrain (R)\\
& \SetCell[c=2]{c}Signal probability&&  \SetCell[r=2]{c}Expected Payoff & \SetCell[c=2]{c}Signal probability&&\SetCell[r=2]{c}Expected Payoff \\
\text{State}& (+) & (-) & & (+) & (-) &\\
\hline\hline
G & \a_G & 1-\a_G & 1  & 1 & 0 & 0\\
\hline
L & \a_L & 1-\a_L & -c  & 1 & 0 & 0\\
\hline 
\end{tblr}
  \caption{Signals probabilities and the receiver's expected payoff}
  \label{table1}
\end{table}

The sender's stage payoff is determined entirely by the choice made by the receiver. If the receiver chooses A, the sender's stage payoff is $-1$; otherwise, it is $0$. The goal of each player is to maximize her or his sum of the two stage payoffs. In particular, the sender's goal is to minimize the number of times the receivers acts. 

The solution concept we employ is Stackelberg equilibrium:
the sender moves first and commits to a strategy (for the whole game),
and then the receiver reacts by choosing his strategy.

%%%%%%%%%%%%%%%%%%%%%%%%%%%%%%
\section{Applications}
\label{sec:applications}

Our framework is applicable to the two-period one-armed bandit problem with a Bernoulli reward process, but with a twist. 
In addition to the decision maker, there is another agent, playing the role of the sender, who is aware of the true probability of success for the arm and can provide information about it to the decision maker, who plays the role of the receiver. The sender's goal is to discourage the decision maker from playing at all. This combination of Bayesian persuasion and bandit exploration applies naturally to various scenarios. 
We will discuss  four  examples that fit our model to varying degrees: law enforcement and tax evasion, international relations, parenting, and gambling.

\paragraph{Tax Enforcement.} 
Our leading example concerns law enforcement in general, and tax evasion in particular. A key feature of law enforcement is that the level of enforcement, as opposed to the level of punishment, is usually unknown to the potential offender. As a consequence, there is a policy question on the part of the police or the enforcement agency of whether to provide information to potential offenders. Law enforcers and policy makers can and sometimes do provide information regarding the enforcement techniques and levels. For example, in certain road segments there are signs indicating “speed camera enforced” or “high enforcement level”. Yet, in other cases, law enforcers and policy makers adopt a vagueness policy, keeping the level of enforcement efforts and techniques secret. One of the best examples is the top-secret computer software algorithm, known as the Discriminant Index Factor (DIF), used by the IRS for selecting tax returns for audits. This algorithm is said to be guarded like the “Coca Cola formula” \cite[][p 9]{harcourt}.

Imagine a repeated game between the tax administration (the sender) and a potential tax evader (the receiver). The tax evader should decide whether to evade taxes (A) or not (R) in each of the two periods, gaining some benefits and facing a penalty in case of detection. The probability of detection, which is a random variable determined by the tax administration's resource allocation or enforcement efficiency, can be either low, benefiting the evader, or high, leading to evader losses. This gives rise to a classical explore-exploit strategy for the tax evader. The tax administration, on its side, knows its own type and can strategically disclose information to or conceal it from the offender to minimize tax evasion.  

The tax administration can of course combat tax evasion by increasing the penalties for it or by improving its efficiency. However, in recent years, in order to overcome the limitations of enforcement based only on ex post sanctions, tax authorities have increasingly relied on strategies of ``enforcement communication''. The latter may include letters or emails with reminders about obligations and the risk of noncompliance, but also more specific information about the taxpayer position.\footnote{The role of enforcement communication as a strategy of compliance risk management is illustrated for example in \citet{EU2010}.} While this communication is often standardised (a reminder of the sanctions in case of evasion), in many cases it includes information about the individual taxpayer's level of compliance, based on data available to the tax administration. If the recipients are selected using data of the tax administration (elaborated through advanced statistical techniques, including machine learning techniques), the very reception of the message provides information on the ability to detect potential violations. This affects perception of the probability of an audit for the specific taxpayer.

These strategies have attracted the attention of behavioral economics, where aspects such as education to legality and civic responsibility, building of a trust relation, are considered. However, the optimal communication and sanction strategy can be analysed also in terms of standard rational behavior as optimal information transmission policy. 

%The interaction between a taxpayer and the tax administration is mapped to our model as follows.
%The tax administration is the sender, and the potential tax evader is the receiver. The probability of detection, a random variable determined by the tax administration's resource allocation or enforcement efficiency, can vary from low, benefiting the offender, to high, leading to offender losses. 
%These probabilities correspond to $\alpha_G$ and $\alpha_L$.
%The tax administration knows its own type and can strategically disclose partial information to the offender to minimize offenses.
%In each period, the evader decides whether to evade taxes (A) or not (R).
%When the taxpayer refrains from evading (R), he receives neither a payoff nor information about the level of enforcement. When he evades (A), his payoff depends on the probability of punishment.  In this case, the signal is intrinsic, and arises from whether tax evasion was detected: the fact that evasion was detected (resp.,~not detected) is interpreted as the negative (resp., positive) signal, and hence we have $\alpha_G > \alpha_L$.

Our model can improve the design of information provision strategies, and, in particular, the proper timing of information disclosure as a function of the administration's information about the taxpayer's past behavior. The different scenarios identified in the model can be related to various assumptions about the tax administration's monitoring technology.
\begin{enumerate}[topsep=3pt]
\item The scenario where information revelation is unconditional corresponds to a situation where the tax administration is not allowed to condition the disclosure of information on available
information on the taxpayer’s past behavior or the resulting conviction.
This scenario also serves as a fundamental benchmark for comparison with the subsequent two scenarios.

\item 
  The scenario where information provision can be contingent on the taxpayer's past behavior arises when a violation becomes known to the tax administration, even if it does not lead to prosecution and punishment (indicated in our model by a negative public signal $\ell$) of the offender.\footnote{This scenario may occur for different reasons, including the inherent limitations of evidence and the higher burden of proof required for convicting and punishing offenders compared to mere knowledge of a violation,
  or the limitation of resources available to the tax administration to start a formal inspection in all cases where evidence available points to a likely violation. Law enforcement typically allocates more resources towards investigating and apprehending potential offenders whom they have reason to believe engaged in the past in unlawful activities, even if these individuals have not yet been convicted or subjected to punishment.}

\item 
  The scenario where information provision is determined solely by the public signal (i.e., the actual conviction of the taxpayer in past periods) is relevant when the tax administration cannot determine if a violation has occurred, unless a full inspection has been conducted and the taxpayer's past behavior has been verified and established.
\end{enumerate}

By investigating these scenarios, we provide valuable insights that may improve the strategies of law enforcement agencies, thereby potentially reducing wrongdoing and enhancing public goals.

\paragraph{International Relations.}
Another application of our model is in the realm of international relations. Abstracting away from exact details, consider a repeated interaction between two countries X and Y. Country X contemplates taking an action that could be perceived as hostile by country Y, potentially triggering a negative reaction. Suppose that the probability of negative reaction can be either high or low depending on country's Y type, which is a private information. Country X benefits from taking the action only if the probability of a reaction from Y is low, otherwise it suffers harm. However, X lacks complete information about Y's likelihood of reacting. The repeated nature of the interaction implies that country X may engage in an explore-exploit strategy. Now imagine that country Y or a third country Z, which knows the type of country Y, have a goal to discourage country X from taking the hostile action. As an alternative to a direct reaction that might escalate to irreversible and potentially catastrophic outcome, country Y (or country Z), playing the role of the sender, could utilize diplomatic channels or informal intelligence relations to provide information to X, playing the role of the receiver, about the probability of Y's reaction (the type of country Y).  The information might encompass insights into Y's readiness for conflict, the attitudes of key decision makers within Y's government, or the perceptions of influential players in the international community.

The timing and content of this information disclosure can have a significant impact on X's decision-making process. Repeated interactions between the sender and receiver can influence X's incentive to ``test'' Y's reaction through its actions. Moreover, the provision of information can be made contingent on specific events, such as X's actions.\footnote{\citet{hennigs.21} analyzes the role of information disclosure by a mediator who wants to prevent a conflict in a model of Bayesian persuasion. However, that model considers a single period and the parties do not learn from their interaction.}

\paragraph{Parents and children.}  
Children and young adults often consume substances or participate in activities that might cause them harm. Examples include using drugs, engage in unsafe sex, driving in motorbikes, swimming in unauthorized beaches, and more.
These activities are often repeated, allowing the teenager to learn the associated risk level and whether they would like to continue participating in them. It stands to reason that the notion that teenagers are risk takers at least partly can be understood as a story of utilizating an explore-exploit strategy.\footnote{Indeed, the literature  has studied how children take risky decisions and whether they have Bayesian intuition, see,
e.g., \citet{jacobs1991use}, \citet{reyna1994fuzzy}, \citet{levin2003risk}, \citet{gigerenzer2021children}.} 

Parents are sometimes more informed about the riskiness of the activities or the ability of their children to safely participate in these potentially harmful activities. Moreover, some parents may wish to dissuade their children from participating in these activities, irrespective of their actual risks, due to perceiving them as excessively dangerous or harmful. These parents may achieve their goals by prohibiting their children from engaging in these activities and punishing them if they disobey or by offering rewards to the children if they behave in the desired way. Alternatively, parents can intervene in a less drastic way by offering information about the riskiness of the activity.  

Our results suggest that the optimal strategy for parents depends on the monitoring structure. When parents are well informed of their children behavior, 
the optimal way to dissuade children is a rewarding strategy --- the child is rewarded for not participating in the risky activity.
A rewarding strategy is also optimal when parents learn that their children participation in the activity only if it goes awry. On the other hand,
in cases where the parents will never know whether their children participate in the risky activity, even if it goes amiss,
they  should provide information about the riskiness before the children decide whether to participate in the activity.

\paragraph{Gambling.} Lastly, our model applies to the original ``one-armed" bandit problem. In the classical bandit problem, a gambler is faced with the decision of playing a slot machine repeatedly. The outcome of playing is contingent on the unknown probability of success, which could be either high or low, resulting in either a gain or a loss to the gambler. The decision-maker lacks knowledge of the exact probability of success, but holds a prior belief about it, and therefore he may engage in explore-exploit strategy. 

Imagine now an additional entity, such as a non-governmental organization or the government, who possesses information about the likelihood of success and is committed to reducing the use of slot machines. The government, for example, could of course prohibit gambling and punish violators. However, in recent years there is a tendency in public policy to utilize less drastic tools to influence behavior. One possibility for the government is to provide information to the gambler about the probability of success. It stands to reason that if the probability of success is low, the government would like to inform the gambler. But what should the government do if the probability of success is high? Our analysis can inform the government on when and how much information it should disclose in order to fulfill the objective of limiting slot machine usage.

%%%%%%%%%%%%%%%%%%%%%%%%%%%%%%

\section{Benchmark: No Information Provision}
\label{sec: no information}
%%%%%%%%%%%%%%%%%%%%%%%%%%%%%%

We examine here a benchmark scenario where the sender provides no information to the receiver. Therefore, the receiver may update his belief about the real state of nature only after taking action A and receiving a signal. 
%Under these circumstances, the receiver has to decide at each period, based on his belief, whether to take action A or R. 

Since in any model with information provision the receiver has the option to disregard any information the sender provides, the payoff associated with the optimal strategy for the receiver in this benchmark case is a lower bound on the payoff that the receiver will attain in \emph{any} Stackelberg equilibrium, even when the sender can send messages to the receiver.  Since the sender's cost when the receiver selects A is 1, and since the receiver's payoff is at most 1, the receiver's payoff in the benchmark case is also a lower bound on cost to the sender in all Stackelberg equilibria when messages can be sent.

Without information, among all possible strategies in the two-stage decision problem, the receiver has three optimal strategy candidates:%
\footnote{It is not difficult to verify that any other strategy is weakly dominated by one of these three strategies.} 
\begin{enumerate}[label=(\roman*)]
\item RR: choosing R in both periods, which yields payoff zero;
\item AC: choosing A in period 1 and making a conditional choice in period 2, namely, choosing A in period 2 if and only if the signal in period 1 was positive, which yields payoff $q-(1-q)c+\a_Gq-\a_L(1-q)c$;
\item AA: choosing A in both periods, which yields payoff $2(q-(1-q)c)$.
\end{enumerate}

When the prior probability of $G$ is sufficiently low (say, $q=0$), the optimal strategy is RR. Conversely, when it is sufficiently  high (say, $q=1$), the optimal strategy is AA. Strategy AC is optimal in an intermediate range characterized by two cut-off points:
\begin{eqnarray}
\label{equ:q1}
q^i&:=&\frac{c(1+\a_L)}{(1+\a_G)+c(1+\a_L) },\\
\label{equ:q2}
q^{ii}&:=&\frac{c(1-\a_L)}{(1-\a_G)+c(1-\a_L)}.
\end{eqnarray}
so that the optimal receiver's strategy can be characterised as follows:
\begin{proposition}[\textbf{No Information Provision}]
\label{theorem:optimum}
When the sender provides no information, the optimal strategy for the receiver is:
 \begin{enumerate}
 \item play RR when $0\leq{q}\leq{q^i}$;
 \item play AC when $q^i< {q}\leq{q^{ii}}$;
 \item play AA when ${q^{ii}} < {q}\leq{1}$.
 \end{enumerate}
\end{proposition}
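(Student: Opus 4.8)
The plan is to first reduce the receiver's decision problem to the three listed candidate strategies (the footnote's claim), and then to locate the optimum by pairwise comparison of their closed-form payoffs. For the reduction I would enumerate the receiver's pure strategies. Since R produces the positive signal with probability one in both states, refraining is uninformative and leaves the period-2 belief equal to the prior $q$; hence after R in period 1 the only continuations are RR and ``refrain-then-act'' (RA). After A in period 1 the receiver observes a binary signal and may condition on it, giving four response patterns: act regardless (AA), refrain regardless (AR), act iff the signal is positive (AC), and act iff it is negative (AC$'$). Because $\alpha_G>\alpha_L$, the posterior on $G$ after a positive signal exceeds that after a negative signal, so the expected act-payoff after a positive signal exceeds that after a negative one; the optimal period-2 response is therefore a threshold in the signal and is one of AA, AC, AR, while the anti-monotone AC$'$ is weakly dominated by this optimal response. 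Finally RA and AR each ``explore without exploiting,'' yielding exactly $q-(1-q)c$, which is dominated by AA when $q-(1-q)c\ge 0$ and by RR otherwise. This leaves RR, AC, AA.

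Next I would record the payoffs
\[
V_{\mathrm{RR}}=0,\qquad V_{\mathrm{AC}}=(1+\alpha_G)q-(1+\alpha_L)(1-q)c,\qquad V_{\mathrm{AA}}=2\bigl(q-(1-q)c\bigr),
\]
where $V_{\mathrm{AC}}$ adds the period-1 payoff $q-(1-q)c$ to the period-2 payoff $\alpha_G q-\alpha_L(1-q)c$ of acting only on a positive signal, and $V_{\mathrm{AA}}$ uses that, once the receiver commits to acting in period 2, the law of iterated expectations makes the period-2 payoff $q-(1-q)c$ irrespective of the period-1 signal. Two comparisons then suffice: $V_{\mathrm{AC}}\ge V_{\mathrm{RR}}=0$ holds iff $(1+\alpha_G)q\ge(1+\alpha_L)(1-q)c$, i.e. iff $q\ge q^i$; and $V_{\mathrm{AA}}-V_{\mathrm{AC}}=(1-\alpha_G)q-(1-\alpha_L)(1-q)c$, so $V_{\mathrm{AA}}\ge V_{\mathrm{AC}}$ iff $q\ge q^{ii}$.

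To assemble these into the three intervals I need $q^i<q^{ii}$, which I would obtain by cross-multiplying the two fractions (all denominators positive) and cancelling the common term $c\,(1-\alpha_L^2)$; the inequality collapses to $(1+\alpha_L)(1-\alpha_G)<(1-\alpha_L)(1+\alpha_G)$, whose difference is $2(\alpha_G-\alpha_L)>0$. Given $q^i<q^{ii}$, transitivity closes the argument: on $[0,q^i]$ we have $q\le q^{ii}$, hence $V_{\mathrm{AA}}\le V_{\mathrm{AC}}\le V_{\mathrm{RR}}$ and RR is optimal; on $(q^i,q^{ii}]$ both $V_{\mathrm{AC}}\ge V_{\mathrm{RR}}$ and $V_{\mathrm{AC}}\ge V_{\mathrm{AA}}$ hold, so AC is optimal; and on $(q^{ii},1]$ we have $V_{\mathrm{AA}}\ge V_{\mathrm{AC}}\ge V_{\mathrm{RR}}$, so AA is optimal (ties at the endpoints being resolved by the stated convention).

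The two linear comparisons and the inequality $q^i<q^{ii}$ are short and routine. The step needing the most care is the reduction to three candidates, and specifically the dismissal of ``act iff negative'' (AC$'$): I would rule it out through the monotone ordering of the two posteriors rather than by signing $V_{\mathrm{AC}}-V_{\mathrm{AC}'}$ directly, since the sign of that difference is not uniform in $(q,c,\alpha_G,\alpha_L)$ and so a direct comparison would not by itself eliminate AC$'$.
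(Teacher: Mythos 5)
Your proof is correct and takes essentially the same route the paper does: reduce the receiver's problem to the three candidates RR, AC, AA (which the paper relegates to a footnote as ``not difficult to verify''), then compare the three linear payoffs pairwise, which produces exactly the cutoffs $q^i$ and $q^{ii}$. The only difference is that you spell out the dominance step the paper omits --- enumerating RA, AR and the anti-monotone rule AC$'$, and eliminating AC$'$ via the monotone ordering of posteriors $q^- < q < q^+$ --- which fills in detail rather than changing the argument.
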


The receiver's optimal strategy that is indicated in Proposition~\ref{theorem:optimum} induces the following payoff $\pi(q)$ to the receiver and cost $N(q)$ to the sender:
\begin{equation}
\label{eq: optimal strategy}
   \pi(q) :=
  \begin{cases}
    0, &0<q\leq{q^i},\\
   \big(q-(1-q)c+q\alpha_G-(1-q)\alpha_Lc\big), &q^i<q\leq q^{ii},\\
     2(q-(1-q)c), &q^{ii}<q\leq{1},
  \end{cases}
\end{equation}

\begin{equation}
  N(q) :=
  \begin{cases}
    0, &0<q\leq{q^i},\\
   1+q \a_G+(1-q)\a_L, &q^i<q\leq q^{ii},\\
     2, &q^{ii}<q\leq{1}.
  \end{cases}
\end{equation}

Figure \ref{fig:1} depicts these two functions, 
$\pi(q)$ in black and $N(q)$ in red; $\pi(q)$ is the maximum of three lines representing the payoff from the three strategies RR (between 0 and $q^i$), 
AC (between $q^i$ and $q^{ii}$)
and AA (between $q^{ii}$ and 1). 

\begin{figure}
\centering
\includegraphics[width=.75\linewidth]{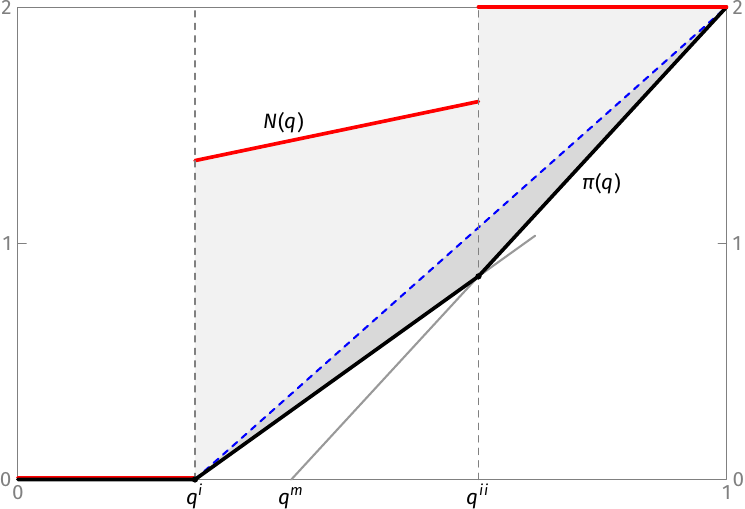}
\caption{Receiver's optimal payoff (black) and sender's cost (red).}
\label{fig:1}
\end{figure}

We denote by $q^m$ the level of $q$ at which the receiver's one-stage payoff from taking action A, i.e., $q-(1-q)c$, is zero:
\begin{equation}\label{qm}
    q^m := \frac{c}{1+c}.
\end{equation} 

It is easy to verify that $q^m \in (q^i,q^{ii})$.  Observe that in period 2 the receiver chooses R (respectively, A) if his posterior belief is below (respectively, above) $q^m$.  When $q > q^{ii}$, the posterior belief is above $q^m$ under both signals, and AA is the optimal strategy of the receiver.

When $q \in (q^i,q^{ii})$, the posterior belief of the receiver is above $q^m$ if he receives the positive signal, and below $q^m$ if he receives the negative signal, and AC is the optimal receiver's strategy.  We can further divide $(q^i,q^{ii})$ into two sub-intervals, $(q^i,q^m)$ and $(q^m,q^{ii})$.  In $(q^i,q^m)$ the receiver strategy is ``explore-exploit'': he bears a negative payoff in period 1 in exchange for a larger positive payoff in period 2.
In $(q^m,q^{ii})$ the receiver strategy is ``exploit-learn-exploit'': the payoff from A is positive in period 1, and in addition, it generates valuable information, such that in period 2 the receiver acts if and only if the signal is positive.

When $q < q^i$, acting yields a negative payoff, and the only incentive to act in period 1 is to obtain information on the state. In this case, even when the posterior after obtaining the positive signal is above $q^m$, the gain in period 2 does not compensate for the loss in period 1, implying that RR is the optimal strategy for the receiver.

\paragraph{The function $\pi(q)$ as a benchmark. }
As mentioned above,
$\pi(q)$ is a lower bound on the receiver's payoff and on the sender's loss in any Stackelberg equilibrium,
even when the sender can send information. 

In a hypothetical scenario where the sender observes the receiver's choices and can make monetary transfers to the receiver, transferring $\pi(q)$ to the receiver would be sufficient to prevent the receiver from ever choosing
A. 
Yet, in our framework, direct monetary transfers from the sender to the receiver are not allowed. Instead, the sender can ``pay'' the receiver with information, which the receiver utilizes to optimize his actions and payoffs. This, of course, is costly to the sender. 

The difference $N(q)-\pi(q)$, which is reflected by the vertical distance between the red line and the black line in Figure~\ref{fig:1}, measures the maximal potential gains for the sender in providing information. 
If the sender can send messages to the receiver and if $\widetilde N(q)$ denotes the sender's cost in equilibrium, then the difference $\widetilde N(q) - \pi(q)$ measures the sender's loss due to the information structure and her inability to transfer money to the receiver. 
The closer the cost to the sender is to $\pi(q)$, the more effective the sender's information provision policy is in influencing the receiver's decision.

%%%%%%%%%%%%%%%%%%%%%%%%%%%%%%
\section{Unconditional Information Provision}\label{sec: uncoditional}
%%%%%%%%%%%%%%%%%%%%%%%%%%%%%%

In this section we examine the scenario where the information the sender provides to the receiver is not conditioned on the receiver's actions or on the signals.  Our goal is to determine the optimal revelation strategy, namely, when and how much information to reveal, and its effectiveness in reducing the sender's costs. Since with no information the cost to the sender is $0$ when $q\in(0,q^i]$, we restrict attention to values $q\in(q^i,1)$.

The set of messages $M$ is assumed to contain two messages, which we denote for convenience $g$ and $\ell$, so that $M=\{g,\ell\}$.

\paragraph{Messaging strategy.}  
Since the sender's strategy is independent of the receiver's behavior,
the sender's strategy in period 1 is a function $\mu_1 : [0,1] \times \{G, L\} \rightarrow \Delta(\{g,\ell\})$,
where $\Delta(\{g,\ell\})$ is the set of probability distributions over $\{g,\ell\}$.
The sender's strategy in period 2 can depend on the message sent in period 1, hence it is a function 
$\mu_2 : [0,1] \times \{G, L\} \times \{g,\ell\} \rightarrow \Delta(\{g,\ell\})$.

Let $0\leq r < q < 1$. We say that the sender \emph{splits $q$ into $\splitted{r;1}$} if she uses the following strategy:
\begin{enumerate}[label=(\roman*)]
\item When the state of nature is $G$, the sender sends the message $g$ with probability $\gamma=\frac{q-r}{(1-r)q}$, and the message $\ell$ with probability $1-\gamma$.
\item When the state of nature is $L$, the sender always sends the message $\ell$.
\end{enumerate}

Simple algebraic manipulations show that, when the receiver's prior belief is $q$, upon obtaining a message, the receiver's updated belief is
\begin{equation}
  \mathbb{P}(G\mid g)=1 \quad\text{and}\quad 
  \mathbb{P}(G\mid\ell)=r,
  \label{eq: split}
\end{equation}
which explains the appellation ``split $q$ into $\splitted{r;1}$''.
Moreover, the probability of the signal $g$ (resp., $\ell$) is $q \gamma$ (resp., $1-q\gamma$).

We will also say that the receiver responds with the ``compliance strategy" if he acts in accordance with the information the sender provides: when the sender reveals that the state is $G$, the receiver acts, and otherwise he refrains. 

The following result characterizes the Stackelberg equilibrium.

\begin{proposition}[\textbf{Unconditional Information Provision}]
\label{theorem: unconditional}
When information revelation is unconditional on actions or signals, for any prior $q$, Stackelberg equilibrium payoffs are unique. The following represents an equilibrium:
\begin{enumerate}
\item When $0<q\le q^{i}$, the sender reveals no information and the receiver plays $RR$. The sender's cost in this equilibrium is $\N{U}(q)= 0$.
\item When  $q^{i}<q<1$, the sender reveals information only in period 1. Furthermore, at the outset of the game the sender splits $q$ into $\splitted{q^{i};1}$. The
receiver complies: he responds with RR if the split realization is  $q^{i}$ and with AA if it is $1$. The sender's cost in this equilibrium is 
$\N{U}(q) = 2\,\frac{q-q^i}{1-q^i}$. 

\end{enumerate}
\end{proposition}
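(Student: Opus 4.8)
The plan is to split the analysis at the threshold $q^i$. For $0<q\le q^i$ the assertion is immediate: since each action costs the sender $1\ge 0$, her cost is bounded below by $0$, and disclosing nothing — against which the receiver's best reply is RR by Proposition~\ref{theorem:optimum} — attains $0$. For $q^i<q<1$ I would first verify that the stated profile is an equilibrium with cost $2\frac{q-q^i}{1-q^i}$. Under the split into $\splitted{q^i;1}$ the message $g$ yields posterior $1$, so AA is a strict best reply; the message $\ell$ yields posterior exactly $q^i$, where, facing a fresh two‑period problem with no further disclosure, RR and AC give the receiver the same payoff $0$ by the very definition of $q^i$, so breaking the tie in the sender's favour gives RR. Thus compliance is optimal and the cost is $2\cdot\mathbb{P}(g)=2\frac{q-q^i}{1-q^i}$.

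The optimality (lower bound) I would recast as a concavification. Writing $N(\cdot)$ for the no‑information cost of Proposition~\ref{theorem:optimum}, define $U(p)$ as the sender's least cost starting from a \emph{post‑period‑$1$} belief $p$, where she may still disclose in period $2$ and the receiver best responds. Because the period‑$1$ message induces a mean‑$q$ distribution of posteriors (two messages suffice by a one‑dimensional Carathéodory argument) and each branch continues at cost $U(\cdot)$, the sender's optimal cost equals $\vex U$ evaluated at $q$. The crux is the claim $U(p)\ge \Phi(p):=\vex N(p)$, with equality at $p\in\{q^i,1\}$. Granting it, $\vex U(q)\ge\vex\Phi(q)=\Phi(q)$ since $\Phi$ is convex, while splitting $q$ into the two contact points $\{q^i,1\}$ attains $\Phi(q)$ and uses no period‑$2$ disclosure — i.e.\ exactly the proposed strategy. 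It then remains to identify $\Phi=\vex N$: I would show $N$ lies weakly above the chord through $(q^i,0)$ and $(1,2)$, namely $\Phi(p)=2\frac{p-q^i}{1-q^i}$ on $[q^i,1]$ with contact only at the endpoints. On $(q^{ii},1]$ this is just $p\le 1$; on $(q^i,q^{ii}]$ it is the inequality $1+p\a_G+(1-p)\a_L\ge 2\frac{p-q^i}{1-q^i}$, a routine check from the formula for $q^i$.

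The hard part is $U(p)\ge\Phi(p)$, which I would prove by splitting on the receiver's period‑$1$ action. If he acts in period $1$ there is at least one period‑$1$ action, and together with the (optimally bounded) period‑$2$ continuation this yields $U(p)\ge\Phi(p)$ across the relevant range. The genuinely delicate case is when he refrains in period $1$: then the cost is entirely the period‑$2$ cost $\mathbb{P}_E[\text{posterior}>q^m]$ under the sender's period‑$2$ experiment $E$, but refraining must be incentive compatible. Because disclosure is unconditional, the period‑$1$ signal is information the receiver could acquire \emph{on top of} $E$, and its marginal ``exploration value'' $EV(E)\ge 0$ must be small enough that acting does not pay, i.e.\ $EV(E)\le (1-p)c-p$. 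This constraint forces a minimum amount of period‑$2$ disclosure, and I would show that any $E$ satisfying it already generates enough actions that $\mathbb{P}_E[\text{posterior}>q^m]\ge\Phi(p)$. This is exactly where the inefficiency of unconditional information as an incentive is quantified, and it is the main obstacle, since it is a nested persuasion‑with‑a‑private‑signal problem. I expect to tame it using the restriction $|M|=2$, which collapses $E$ to a one‑parameter (clear‑message) family, so that the bound can be verified by direct computation, with the exploration‑value constraint binding precisely when the induced period‑$2$ cost reaches the chord value $\Phi(p)$.

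Finally, uniqueness of equilibrium payoffs follows because the optimal cost is pinned to $\vex U(q)=\Phi(q)$ regardless of which minimiser is used, so $\N{U}(q)$ is unique; the receiver's equilibrium payoff is then determined as well, since at the optimum actions occur only at belief $1$, where the sender's cost and the receiver's payoff coincide.
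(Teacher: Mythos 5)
Your overall architecture is the same as the paper's: verify the split-into-$\splitted{q^i;1}$ strategy as an equilibrium, treat period-1 disclosure as a convexification device so that optimality reduces to lower-bounding strategies that disclose only in period 2, and identify the value with the chord through $(q^i,0)$ and $(1,2)$. Those pieces are fine. The genuine gap is at the pivotal step, the lower bound when the receiver refrains in period 1. You claim that the restriction $|M|=2$ ``collapses $E$ to a one-parameter (clear-message) family.'' That is false as stated: with two messages the period-2 experiment still has two free parameters, $\mathbb{P}(g\mid G)$ and $\mathbb{P}(g\mid L)$, and induces a split $\splitted{r_\ell;r_g}$ whose upper posterior $r_g$ need not equal $1$. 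The reduction to clear-message (``extreme'') experiments, i.e.\ to $\mathbb{P}(g\mid L)=0$, is a conclusion requiring proof, and it is exactly the content of the paper's Lemma~\ref{lemma:appendix}. That lemma's proof is not a formality: replacing a posterior in $(q^m,1)$ by a further split into $\splitted{0;1}$ clearly lowers the sender's cost, but one must show the modification does not destroy the receiver's incentive to refrain in period 1 --- the receiver compares refraining against acting and combining his private signal with the sender's message, and the modification moves both sides of that comparison; the paper's equations \eqref{equ:e1}--\eqref{equ:e6} are devoted to precisely this. Without this reduction, your proposed ``direct computation'' ranges over a strictly smaller family than the set of feasible experiments, so the bound $\mathbb{P}_E[\text{posterior}>q^m]\ge\Phi(p)$ is unproven. (Your incentive constraint itself, $EV(E)\le (1-p)c-p$, is the right one; it is the domain of the minimization that is wrong.)

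A secondary, more easily repaired gap is the branch where the receiver acts in period 1: ``together with the (optimally bounded) period-2 continuation this yields $U(p)\ge\Phi(p)$'' is an assertion, not an argument. The paper disposes of this case differently and more cleanly: if the receiver acts in period 1 against a strategy with no period-1 disclosure, then, disclosure being unconditional, the period-2 information may as well be released in period 1 (the receiver's period-2 information set is unchanged and his period-1 behavior can only switch to refraining, which helps the sender), which reduces the problem to period-1-only disclosure, already solved by the convexification. Your version can be patched directly --- any continuation makes the receiver act in period 2 with probability at least $\max\bigl(0,\tfrac{p-q^m}{1-q^m}\bigr)$ by the martingale property of posteriors, and one checks $1+\tfrac{p-q^m}{1-q^m}\ge 2\,\tfrac{p-q^i}{1-q^i}$ from the formulas for $q^m$ and $q^i$ --- but as written the step is missing.
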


Proposition \ref{theorem: unconditional} implies that it is inefficient for the sender to provide information unconditionally in period 2. In equilibrium, any disclosed information should be provided before the receiver can take any action (in period 1). 
When the prior belief $q$ is smaller than a certain threshold (i.e., $q^{i}$),  without any additional information, the receiver has no incentive to act. In this trivial case, there is no reason to disclose information.
When $q>q^{i}$, in equilibrium, if the state is $G$, the sender will reveal the truth to the receiver with a positive probability (which is a function of $q$). 
In this case, the receiver will respond with AA. 

In all other cases, the receiver will assign probability $q^i$ to $G$, and will respond with RR. 
The cost to the sender is, therefore, twice the probability with which revelation occurs. 
The blue dashed line in Figure~\ref{fig:1} exhibits the equilibrium cost to the sender. As can be seen, it is below the red line, but above the black line. In words, with Unconditional Information Provision, the sender's cost in the Stackelberg equilibrium is lower than the cost with no information provision, but still does not reach the lower bound.

At first glance, it may seem obvious that the sender is better off disclosing information upfront in period 1 rather than delaying it until period 2. However, upon closer examination, one might question why the sender would not simply commit to revealing unconditional information in period 2 that coincides with what the receiver could discover by acting in period 1. Such a commitment could seemingly incentivize the receiver to refrain in period 1, saving the associated costs. 
As Proposition \ref{theorem: unconditional} demonstrates, this reasoning is flawed.
The fallacy lies in the assumption that acting in period 1 would fully reveal the truth. Due to the stochastic nature of the signal, acting in period 1 will inevitably leave room for further valuable exploration.

The receiver can always act in period 1, and acquire additional and independent information beyond any (non-fully revealing) unconditional information revealed in period 2. This implies that the sender may never be able to fully replicate the information the receiver can obtain in period 1.  In cases where the prior probability $q$ falls within the range $(q^{i},q^m)$, a reasonable amount (from the sender's perspective) of unconditional disclosure of information in period 2 would still leave the receiver's benefit of acting in period 1 outweighing the associated cost.

Stated differently, when information revelation is unconditional on actions or signals, the receiver possesses an additional strategy not available in the benchmark case: selecting A in period 1 and conditioning the choice in period 2 on both the signal obtained in period 1 and the message provided by the sender. The non-trivial assertion of Proposition \ref{theorem: unconditional} is that the amount of information revealed in period 2, which could entice the receiver to forego the exploration cost in period 1, is so substantial that it exceeds the amount necessary to achieve this outcome from the start, even considering the potential value of this information in both periods.

 \ignore{
 This means that the receiver may nevertheless gain from taking the action in period 1 to acquire additional information. This renders such a revelation strategy inefficient
 
 can be combined with information that the receiver can independently obtain in period 1.

to incentivize the receiver to avoid the exploration cost in period 1 and choose option R instead. Proposition \ref{theorem: unconditional} clarifies that this approach is not optimal. The rationale behind this is as follows.

Why is it more advantageous for the sender to disclose information in period 1 rather than making a promise to reveal information in period 2? In situations where the prior probability $q$ falls within the range $(q^{i},q^m)$ and no information is provided in period 1, the receiver would opt for a costly action A in period 1 solely for its informational value (employing an explore-exploit strategy). One might question why the sender would not simply promise to disclose unconditional information in period 2 to incentivize the receiver to avoid the exploration cost in period 1 and choose option R instead. Proposition \ref{theorem: unconditional} clarifies that this approach is not optimal. The rationale behind this is as follows.

Any unconditional information disclosed by the sender in period 2 can be combined with information that the receiver can independently obtain in period 1. This suggests that the receiver possesses an additional strategy not available in the benchmark case: selecting A in period 1 and basing the choice in period 2 on both the signal obtained in period 1 and the message provided by the sender. It turns out that the quantity of information revealed in period 2, which could prompt the receiver to avoid the cost of exploration in period 1, is so substantial that it exceeds the amount necessary to achieve this goal from the outset, even when considering the potential usefulness of this information in both periods.}

\paragraph{Uniqueness.}
In Proposition \ref{theorem: unconditional}, it is stated that the equilibrium payoffs are unique. However, the equilibrium strategies are typically not unique. Specifically, for values of $q$ that fall within a range  where the sender's payoff is linear, the sender has the flexibility to split the prior probability into different values within this interval without affecting the payoffs. Therefore, when the cost to the sender is linear, the sender can choose from multiple equilibrium strategies. For instance, in the trivial case of Proposition \ref{theorem: unconditional}, when $q\in (0,q^i)$, the sender can split $q$ into $\splitted{0;q^i}$ at the beginning of the game without changing the outcome. 

%%%%%%%%%%%%%%%%%%%%%%%%%%%%%%

\section{Action-Based Information Provision}\label{sec: action-based}
%%%%%%%%%%%%%%%%%%%%%%%%%%%%%%
We now examine the scenario where the sender's messaging policy can be based on the receiver's behaviour in period 1. %\footnote{\sout{In the law enforcement context, where we identify the signal with detection and we consider that an act (violation) is not always detected, this amounts to assuming that it is possible to condition the message to the violation even when the violation is not detected and punished. This may be the case for some crimes whose level can be ascertained even if offenders are not detected and punished for lack of admissible evidence. With respect to other types of crimes, such as victimless crimes, this is less feasible; see section \ref{sec: signal-based}.}} 
 Formally, the sender's messaging strategy is $\mu = (\mu_1, \mu_2) $, where  $\mu_1: [0,1] \times \{G, L\} \rightarrow \Delta(\{g,\ell\})$

and $\mu_2: [0,1] \times \{G, L\} \times \{\text{A}, \text{R}\}\rightarrow \Delta(\{g,\ell\})$.

Suppose the sender reveals the true state of nature in period 2 and the receiver utilizes the compliance strategy: he selects R in period 1 and A in period 2 if and only if the sender revealed that the state is $G$. Then the payoff to the receiver is $q$, namely, the identity function. The following lemma relates the identity function with $\pi(q)$, the lower bound on the receiver's equilibrium payoff.

\begin{lemma}\label{lemma 1}
There is a unique point $p^*>0$ such that $\pi(p^*)=p^*$. Moreover,
\begin{enumerate}[label=\alph*)]
\item $p^* \geq q^{ii}$  if and only if $\,2\a_G - \a_L \le 1$;\label{lemma:a}
\item $p^*>q^m$;\label{lemma:b}
\item if $q<p^*$ then $q \leq \pi(q).$\label{lemma:c}
\end{enumerate}
\end{lemma}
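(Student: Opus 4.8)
The plan is to derive the entire lemma from the single observation that $g(q):=\pi(q)-q$ is convex and piecewise affine on $[0,1]$. Indeed, $\pi$ is the upper envelope of the three affine payoff functions $q\mapsto 0$ (RR), $q\mapsto q(1+\a_G)-(1-q)c(1+\a_L)$ (AC) and $q\mapsto 2\big(q-(1-q)c\big)$ (AA), so $\pi$ is convex and piecewise affine, and its three slopes $0<(1+\a_G)+c(1+\a_L)<2(1+c)$ are strictly increasing across the breakpoints $q^i<q^{ii}$ (the right inequality is just $(1-\a_G)+c(1-\a_L)>0$). Hence $g$ is convex, with slope $-1$ on $(0,q^i]$ and strictly positive slopes $\a_G+c(1+\a_L)$ and $1+2c$ on $(q^i,q^{ii}]$ and $(q^{ii},1]$. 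Thus $g$ strictly decreases on $(0,q^i]$ to its minimum $g(q^i)=-q^i<0$ and strictly increases thereafter. Since $g(q)=-q<0$ on $(0,q^i]$ and $g(1)=1>0$, the increasing branch crosses zero exactly once, at some $p^*\in(q^i,1)$, and this is the only zero of $g$ on $(0,1]$. This simultaneously yields existence and uniqueness of $p^*>0$ with $\pi(p^*)=p^*$, together with the sign pattern $g<0$ on $(0,p^*)$ and $g>0$ on $(p^*,1]$, which is the engine for all three parts.

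For part \ref{lemma:a} I would use that $g$ is strictly increasing on $(q^i,1]\ni q^{ii}$, so that $p^*\ge q^{ii}$ iff $g(q^{ii})\le 0$, i.e. $\pi(q^{ii})\le q^{ii}$. Evaluating $\pi(q^{ii})=2\big(q^{ii}-(1-q^{ii})c\big)$ reduces the inequality to $q^{ii}\le 2c(1-q^{ii})$. Substituting $q^{ii}=\frac{c(1-\a_L)}{(1-\a_G)+c(1-\a_L)}$ and $1-q^{ii}=\frac{1-\a_G}{(1-\a_G)+c(1-\a_L)}$ and clearing the common positive denominator and the factor $c$ leaves $1-\a_L\le 2(1-\a_G)$, i.e. $2\a_G-\a_L\le 1$, as claimed.

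For part \ref{lemma:b} the same monotonicity gives $p^*>q^m$ iff $g(q^m)<0$, i.e. $\pi(q^m)<q^m$; this comparison is legitimate because $q^m\in(q^i,q^{ii})\subset(q^i,1]$. Here I would evaluate $\pi$ at $q^m$ with the AC formula and exploit the defining identity $q^m-(1-q^m)c=0$: the pure acting terms cancel, while $(1-q^m)\a_L c=\a_L q^m$, so $\pi(q^m)=q^m(\a_G-\a_L)$. Since $0<\a_G-\a_L<1$ (using $\a_G>\a_L$ together with $\a_G<1$), this gives $0<\pi(q^m)<q^m$, hence $p^*>q^m$ (and incidentally reconfirms $p^*>q^i$).

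Finally, part \ref{lemma:c} is read straight off the sign pattern, with no further computation. The desired inequality $q\le\pi(q)$ is precisely $g(q)\ge 0$, and by the analysis above $g(q)\ge 0$ holds exactly on $[p^*,1]$, with equality only at $p^*$. Thus $q\le\pi(q)$ holds for every $q$ at or beyond the fixed point $p^*$, the opposite ordering $\pi(q)\le q$ holding strictly below it; the fixed point $p^*$ is exactly the threshold at which the identity payoff $q$ overtakes the benchmark $\pi(q)$. None of the qualitative steps is delicate; I expect the only real obstacle to be the algebraic reduction in part \ref{lemma:a}, where one must substitute the closed form of $q^{ii}$ carefully so that the denominators and the factor $c$ cancel cleanly and the criterion collapses to the linear condition $2\a_G-\a_L\le 1$.
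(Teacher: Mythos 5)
Your proof is correct, and while it rests on the same underlying idea as the paper's (convexity of $\pi$ versus the linear identity), your implementation is genuinely different and arguably cleaner. The paper proves part \emph{a)} by a two-case analysis: it solves the fixed-point equation $p^*=\pi(p^*)$ in closed form in each linear regime (obtaining $p^*=\frac{2c}{1+2c}$ when $p^*\ge q^{ii}$ and $p^*=\frac{(1+\alpha_L)c}{\alpha_G+(1+\alpha_L)c}$ when $p^*<q^{ii}$) and then compares each closed form with $q^{ii}$; the two exhaustive cases with complementary conclusions yield the biconditional. You instead never solve for $p^*$: you exploit strict monotonicity of $g(q)=\pi(q)-q$ above $q^i$ to reduce \emph{a)} to the single sign test $g(q^{ii})\le 0$, which delivers the ``if and only if'' in one computation. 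Likewise for \emph{b)}, the paper compares the two closed forms of $p^*$ with $q^m=\frac{c}{1+c}$, whereas you evaluate $\pi(q^m)=q^m(\alpha_G-\alpha_L)$ directly using $u(q^m)=0$ --- a neat cancellation that avoids closed forms entirely. Note that both routes need the same non-degeneracy $\alpha_G-\alpha_L<1$ for the strict inequality in \emph{b)} (your appeal to $\alpha_G<1$; in the paper it is the condition $1+\alpha_L>\alpha_G$ hidden in the closed-form comparison), which holds because the signal is noisy. Your explicit slope bookkeeping also makes the paper's terse uniqueness argument, and its one-line justification of \emph{c)} via $\pi(1)>1$, fully rigorous.

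One point deserves flagging: the lemma as printed states \emph{c)} as ``$q<p^*$ implies $q\le\pi(q)$,'' but this is a typo --- the paper's own appendix restates \emph{c)} as ``$q<p^*$ implies $q\ge\pi(q)$,'' and this corrected direction is what the later definition of $\xi(q)=\frac{q-\pi(q)}{1-\pi(q)}$ and the proof of Theorem~\ref{theorem:observe} actually require. Your sign pattern ($g<0$ on $(0,p^*)$, $g>0$ on $(p^*,1]$) proves exactly the corrected claim, and you correctly observe that the literal inequality $q\le\pi(q)$ holds only on $[p^*,1]$; so your argument is aligned with the paper's intent. Only your closing phrase is worded backwards: at $p^*$ it is the benchmark $\pi(q)$ that overtakes the identity payoff $q$, not the reverse --- a cosmetic slip with no effect on the mathematics.
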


Lemma~\ref{lemma 1} states that, depending on the  primitives of the model, $p^*$  might be above or below $q^{ii}$. The following theorem emphasizes the importance of this relationship to achieving the lower bound of the sender's loss (the lower bound is attainable only when $p^* \geq q^{ii}$). 

Denote
\begin{equation}
    \xi(q):=\frac{q-\pi(q)}{1-\pi(q)}.
\end{equation}
By Lemma \ref{lemma 1}(c),  $\xi(q)\geq0$ whenever $q\leq p^*$.

\begin{theorem}[\textbf{Action-Based Information Provision}]
\label{theorem:observe}
When the messaging strategy of the sender may depend on the  receiver's action, equilibrium strategies, the equilibrium payoff of the receiver, and the sender's cost, denoted $\N{A}(q)$, are as follows:
\begin{enumerate}
\item For $0<q\le q^{i}$, the sender reveals no information and the receiver plays $RR$. 
In this case, $\N{A}(q)= 0$. 
\item For $q^i<q\leq p^*$, the sender provides no information in period 1; in period 2, conditional on the receiver choosing R in period 1, she provides information by splitting $q$ into $\splitted{\xi(q);1}$, where $\xi(q)<q^i$. The receiver follows the compliance strategy.
In this case, $\N{A}(q)=\pi(q)$.
\item For $p^*<q <1$,
the sender splits $q$ into  $\splitted{p^*;1}$ in period 1; if the split realization is $p^*$ and the receiver refrains in period 1, then in period 2 she splits $p^*$ into $\splitted{0;1}$.
In this case, $\N{A}(q)=\pi(q)$ if and only if $p^*> q^{ii}$. Otherwise, $\N{A}(q)>\pi(q)$.
\end{enumerate}
\end{theorem}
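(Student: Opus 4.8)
The plan is to anchor all three cases to a single lower bound: in any Stackelberg equilibrium the sender's cost is at least the receiver's payoff, because each period in which the receiver acts costs the sender exactly $1$ while yielding him an expected stage payoff $p-(1-p)c\le 1$ at his posterior $p$; by Proposition~\ref{theorem:optimum} the receiver's payoff is in turn at least $\pi(q)$. Case~1 is then immediate: on $(0,q^i]$ we have $\pi(q)=0$, withholding all information leaves $RR$ as the receiver's best response, and the resulting cost $0$ already meets the bound, so $\N{A}(q)=0$.

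For Case~2 the key is the identity built into $\xi$. Splitting $q$ into $\splitted{r;1}$ makes a receiver who acts only on the revealing message $g$ earn $\mathbb{P}(g)=\frac{q-r}{1-r}$; setting this equal to $\pi(q)$ and solving gives exactly $r=\xi(q)$. I would first invoke Lemma~\ref{lemma 1} to record that $0\le\xi(q)<q^i<q^m$ for $q^i<q\le p^*$, so the split is feasible and, after the message $\ell$ (posterior $\xi(q)$), the receiver refrains in the last period. Conditioning the period-2 split on $R$ in period~1 then makes compliance yield $\pi(q)$, whereas deviating to $A$ in period~1 forfeits the reward and, if the sender withholds information after $A$, leaves the receiver with exactly his benchmark payoff $\pi(q)$; under the Stackelberg tie-breaking convention he complies. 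Since he acts only in period~2 after $g$, the cost equals $\mathbb{P}(g)=\pi(q)$, matching the lower bound.

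For Case~3 I would first check that the displayed strategy is an equilibrium and compute its cost, then compare with $\pi$. After the period-1 message $g$ the posterior is $1$ and $AA$ is forced (payoff and cost $2$); after $\ell$ the posterior is $p^*$, where Case~2 applies verbatim --- note that $\xi(p^*)=0$, matching the prescribed second-period split $\splitted{0;1}$ --- so the receiver complies with continuation cost $\pi(p^*)=p^*$. Averaging over the split weights gives cost $\frac{q-p^*}{1-p^*}\cdot 2+\frac{1-q}{1-p^*}\cdot p^*$, which is precisely the value at $q$ of the secant line $L$ joining $(p^*,p^*)$ to $(1,2)$; this is $\N{A}(q)$. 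The comparison with $\pi$ is now clean: since $\pi(p^*)=p^*$ and $\pi(1)=2$, $L$ is the chord of the convex function $\pi$ across $[p^*,1]$, so $L\ge\pi$ there, with equality if and only if $\pi$ is affine on $[p^*,1]$. As the only kink of $\pi$ above $q^i$ lies at $q^{ii}$ (and $p^*>q^m>q^i$), this holds exactly when $p^*\ge q^{ii}$, i.e.\ by Lemma~\ref{lemma 1}(a) when $2\a_G-\a_L\le 1$, giving the dichotomy $\N{A}(q)=\pi(q)$ versus $\N{A}(q)>\pi(q)$.

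The remaining and hardest task is optimality in Case~3: that no action-based strategy beats the secant value. I would cast the sender's problem as a convexification --- her period-1 message splits $q$ into posteriors, and from posterior $p$ the best continuation (no further period-1 split) costs some $C(p)$, so the equilibrium cost is the lower convex envelope $\mathrm{conv}(C)(q)$. Cases~1--2 give $C(p)=\pi(p)$ for $p\le p^*$, while $C(1)=2$ and $C\ge\pi$ everywhere; the split into $p^*$ and $1$ shows $\mathrm{conv}(C)(q)\le L(q)$. For the matching bound $\mathrm{conv}(C)(q)\ge L(q)$ it suffices to prove $C(p)\ge L(p)$ for every $p$, since averaging over any split then dominates $L(q)$ by linearity of $L$. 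On $[0,p^*]$ this is a (tedious but elementary) comparison of the convex $\pi$ with the line $L$; the genuinely delicate part is bounding $C(p)$ for $p>p^*$, where the reward no longer dissuades acting, so the receiver acts in period~1, privately observes his exploration signal, and the sender can persuade in period~2 only through an action-based public message that the receiver combines with that private signal. I expect this coupling of the receiver's private signal with the sender's second-period message --- absent in the benchmark --- to be the main obstacle, requiring an explicit best-response computation at each posterior $p>p^*$ and a verification that the resulting cost stays above $L$.
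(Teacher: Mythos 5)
Your Cases 1 and 2, and the constructive half of Case 3, are correct and essentially identical to the paper's own proof: the cost-at-least-payoff-at-least-$\pi(q)$ lower bound, the identity $\mathbb{P}(g)=\frac{q-r}{1-r}=\pi(q)$ at $r=\xi(q)$, the tie-breaking argument after a deviation to A, the secant-line cost computation, and the dichotomy via linearity of $\pi$ on $[p^*,1]$ (equivalently $p^*\ge q^{ii}$, Lemma~\ref{lemma 1}(a)) all match. One small misattribution: the inequality $\xi(q)<q^i$ for $q\in(q^i,p^*]$ is \emph{not} part of Lemma~\ref{lemma 1}; it requires its own short argument (the paper shows $\xi$ is strictly decreasing on this interval with $\xi(q^i)=q^i$ and $\xi(p^*)=0$), and since the theorem asserts this inequality you cannot simply ``record'' it.

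The genuine gap is the one you yourself flag and then defer: optimality of the Case-3 strategy when $p^*\le q^{ii}$. You correctly reduce it to showing $C(p)\ge L(p)$ for every posterior $p$, but for $p>p^*$ you only announce that ``an explicit best-response computation'' would be needed and that you ``expect'' the cost to stay above $L$. That is precisely the step that pins down $\N{A}(q)$ as the secant value, establishes that the displayed strategy is Stackelberg-optimal, and yields $\N{A}(q)>\pi(q)$ when $p^*<q^{ii}$; without it the proof of Case 3 is incomplete. Your instinct that the coupling of the sender's period-2 message with the receiver's private exploration signal is delicate is in fact well founded --- for some parameters, partial revelation after A strictly lowers the period-2 acting probability below $\mathbb{P}(+)$, so the exact best response after A is genuinely messy (the paper itself sidesteps this by simply taking the no-information continuation cost \eqref{eq:cost-to-sender-1} as the value after A, which is an upper bound on the true continuation cost, not an identity). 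The resolution is that you never need the exact value of $C(p)$, only a lower bound that dominates $L$: conditional on each signal realization the receiver's final posteriors must average to $p^+$ (resp.~$p^-$), so the probability of acting in period 2 is at least $\mathbb{P}(+)\max\bigl\{0,\tfrac{p^+-q^m}{1-q^m}\bigr\}+\mathbb{P}(-)\max\bigl\{0,\tfrac{p^--q^m}{1-q^m}\bigr\}$, whatever message the sender appends. This gives $C(p)\ge 1+p\a_G-(1-p)\a_L c$ on $(p^*,q^{ii}]$ and $C(p)\ge 1+\tfrac{p-q^m}{1-q^m}$ on $(q^{ii},1]$; both bounds are linear and agree at $q^{ii}$, so $C\ge L$ on $(p^*,1]$ reduces to checking the three points $p^*$, $q^{ii}$, $1$ (at $p^*$ the bound exceeds $1>p^*=L(p^*)$; at $1$ both equal $2$; at $q^{ii}$ the bound equals $1+\pi(q^{ii})/2$ and the inequality $1+\pi(q^{ii})/2\ge L(q^{ii})$ follows from $\tfrac{q^{ii}-p^*}{1-p^*}\bigl[(1-\a_G)+c(1-\a_L)\bigr]\le 1+c$). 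Adding this finite verification, together with the comparison of $\pi$ with $L$ on $[0,p^*]$ that you already sketched, closes the argument.
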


Theorem \ref{theorem:observe} states that it is efficient to condition information disclosure on the actions of the receiver, leading to the optimal messaging strategy of deferring information disclosure to period 2 \citep[see also][]{fuchs2007contracting}. This finding sharply contrasts with Proposition \ref{theorem: unconditional}, where it is efficient not to delay information.

 When the state $G$ is completely revealed, the maximal payoff to the receiver in period 2 is $q$. Moreover, $\pi(q)$ is a lower bound on the receiver's equilibrium payoff. It implies that when $q \geq \pi(q)$, providing information in period 2 alone suffices to incentivize the receiver to choose R in period 1. Conversely, when $q < \pi(q)$, postponing information disclosure to  period 2 becomes insufficient, requiring that information be provided also in period 1. According to Lemma~\ref{lemma 1}, the threshold level, denoted  $p^*$, determines whether the information in period 2 is sufficient to dissuade the receiver from choosing R in period 1.

When $p^* > q^{ii}$, providing information in period 1 does not harm the sender's interests. In fact, it allows the sender to achieve the lower bound of her loss. This is because, when $p^* > q^{ii}$, within the interval $(p^*,1)$, the receiver's optimal strategy in the absence of information is to choose strategy AA. Therefore, the additional information revealed by the sender in period 1 does not improve the receiver's payoffs (refer to Figure \ref{fig:2}(b)).

 Conversely, when $p^* < q^{ii}$, within the interval $(p^*,q^{ii})$, the receiver's optimal strategy without any information is to choose strategy AC. However, the receiver can utilize the information provided in period 1 to increase his payoffs by employing a conditional strategy: choosing strategy AA when state $G$ is revealed in period 1, and choosing strategy AC when the state $G$ remains undisclosed. This is the reason why providing information in period 1 becomes detrimental to the sender. In this case, the sender does not attain the lower bound of her loss (as depicted by the dashed blue line in Figure \ref{fig:2}(a)).
  
\begin{figure}
\makebox[\textwidth][c]{
\begin{minipage}{.6\textwidth}
\centering
\includegraphics[width=\linewidth]{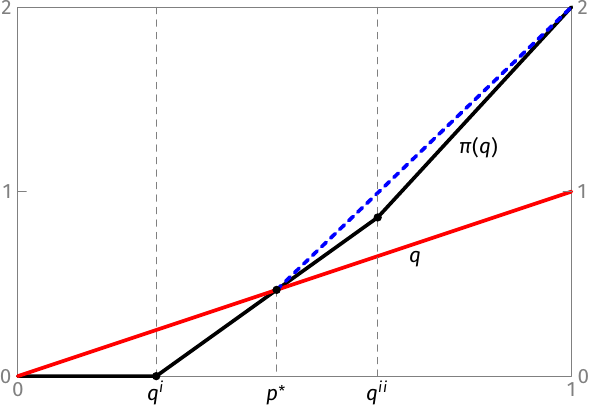}
(a) The case $p^*<q^{ii}$
\end{minipage}
\hspace{2mm}
\begin{minipage}{.6\textwidth}
\centering
\includegraphics[width=\linewidth]{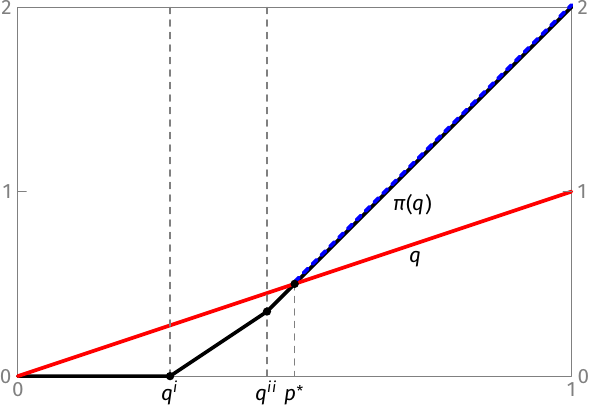}
(b) The case $p^*\ge q^{ii}$
\end{minipage}}
\caption{The graphs of $q$ (red) and $\pi(q)$ (black),
together with the sender's cost in equilibrium when $q>p^*$ (blue, dashed).}
\label{fig:2}
\end{figure}

\paragraph{Uniqueness.}
In Theorem \ref{theorem:observe}, like in Proposition \ref{theorem: unconditional}, equilibrium payoffs are unique, but equilibrium strategies need not be unique. For example, when $p^*<q^{ii}$ and $q\in(q^{i},p^*)$, the sender can split the prior to values within the interval $(q^i,p^*)$, and promise to reveal the appropriate level of information in period 2, conditional on the receiver choosing R in period 1.

%%%%%%%%%%%%%%%%%%%%%%%%%%%%%%
\section{Signal-Based Information Provision}
\label{sec: signal-based}
%%%%%%%%%%%%%%%%%%%%%%%%%%%%%%

In this section we assume that information provision cannot be based on the actions of the receiver, but rather on the signal they generate. 
This amounts to assuming that actions are private and the signals are public. 
In this variation, the sender's messaging strategy is $\mu = (\mu_1, \mu_2)$, where $\mu_1: [0,1] \times \{G, L\} \rightarrow \Delta(\{g,\ell\})$ and $\mu_2: [0,1] \times \{G, L\} \times \{+,-\} \rightarrow \Delta(\{g,\ell\})$.

The next result describes the Stackelberg equilibrium.
As it turns out, the equilibrium often has the same structure as in the action-based scenario.

\begin{theorem}[\textbf{Signal-Based Information Provision}]
\label{theorem:signal-based}
Suppose the receiver's actions are private, but the signals they generate are public. Then, an equilibrium strategy, the equilibrium payoff of the receiver, and the sender's cost, denoted $\N{S}(q)$, are as follows: 
\begin{enumerate}
    \item If $\a_G\le \frac{1}{2}$, then the equilibrium coincides with the action-based equilibrium as described in Theorem~\ref{theorem:observe}.
    \item If $\a_G> \frac{1}{2}$, there is a threshold 
    $p^e\in(q^m,\min{[p^*, q^{ii}]})$  such that:
    \begin{enumerate}[label=\roman*)]
        \item For $0 < q\le p^e$, the equilibrium coincides with the action-based equilibrium, as described in cases $a$ and $b$ of Theorem \ref{theorem:observe}. In this case, $\N{S}(q) = \pi(q)$.
        \item For $p^e<q<1$, in period 1 the sender splits $q$ into $\splitted{p^e;1}$. If the split realization is $p^e$, then in period 2 she splits $p^e$ into $\splitted{\xi(p^e);1}$. In this case, $\N{S}(q)>\pi(q)$.
     \end{enumerate}
\end{enumerate}
\end{theorem}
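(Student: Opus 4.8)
The plan is to exhibit the candidate equilibrium strategies, verify they form a Stackelberg equilibrium by ruling out the one deviation that distinguishes the signal-based regime from the action-based one, and then compute the sender's cost. The only novel feature relative to Theorem~\ref{theorem:observe} is that the sender conditions her period-2 message on the public signal rather than on the action, so the candidate strategy below the relevant threshold is: reveal nothing in period~1, and in period~2 reward the positive signal by splitting the prior into $\splitted{\xi(q);1}$, while revealing nothing after a negative signal. Since a refraining receiver generates the positive signal with probability one, his on-path payoff and the induced cost coincide with the action-based case, namely $\pi(q)$ and $\N{S}(q)=\pi(q)$. The whole content of the theorem therefore lies in deciding when the receiver can profitably deviate to acting in period~1, since a deviator who happens to draw the positive signal still collects the sender's reward, while having in addition acquired private information.

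First I would compute the value $D(q)$ of this deviation. The deviator earns the period-1 stage payoff $q-(1-q)c$ and then acts optimally in period~2 on the basis of both his private signal and the sender's message. The key observation is a martingale/splitting argument: if the sender's reward after a positive signal does not alter the action the deviator would have taken anyway (he acts after both messages $g$ and $\l$), then the reward adds nothing and $D(q)=\pi(q)$, so the deviation is not strictly profitable. The reward strictly helps the deviator precisely when the posterior after the pair (positive signal, message $\l$), which I will denote $q_{+\l}$, falls below the period-2 action cutoff $q^m$, forcing him to refrain there. I would show that $q_{+\l}<q^m$ holds if and only if a single inequality of the form $\frac{\xi(q)}{1-\xi(q)}<\frac{c\,\a_L}{\a_G}$ holds, and that in that regime $D(q)-\pi(q)=(1-q)\big(c\,\a_L-\frac{\a_G\,\xi(q)}{1-\xi(q)}\big)>0$.

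Next I would turn this into the threshold statement. Since $\xi$ is decreasing on $(q^i,p^*)$ and vanishes at $p^*$, the inequality above first becomes binding at a unique $p^e$ characterized by $q_{+\l}=q^m$: for $q\le p^e$ the deviation is unprofitable and the candidate strategy is an equilibrium with $\N{S}(q)=\pi(q)$, while for $q>p^e$ it is strictly profitable. The dichotomy between the two cases then comes from locating $p^e$ relative to the region boundaries. When $\a_G\le\tfrac12$ one has $2\a_G-\a_L<1$, hence $p^*\ge q^{ii}$ by Lemma~\ref{lemma 1}\ref{lemma:a}, and I would check that the binding inequality has no solution inside the AC-region $(q^i,q^{ii})$ and, separately, that no profitable deviation arises in the AA-region $(q^{ii},p^*)$ where $q_-$ exceeds $q^m$; consequently the action-based equilibrium survives unchanged for all $q$. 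When $\a_G>\tfrac12$ the solution $p^e$ lies strictly inside $(q^m,\min[p^*,q^{ii}])$, which simultaneously fixes its position relative to $q^m$, $p^*$, and $q^{ii}$ as asserted.

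Finally, for $q>p^e$ I would construct the modified equilibrium: split $q$ into $\splitted{p^e;1}$ in period~1, and at the realization $p^e$ reward a refraining receiver in period~2 by the maximal incentive-compatible split $\splitted{\xi(p^e);1}$, maximal precisely because at $p^e$ one has $q_{+\l}=q^m$, so the receiver is exactly indifferent to deviating. The induced cost is the chord joining $(p^e,\pi(p^e))$ to $(1,2)$, and since $\pi$ is convex on $[p^e,1]$ this chord lies strictly above $\pi$, giving $\N{S}(q)>\pi(q)$. I expect the main obstacle to be twofold: carrying out the deviation computation cleanly across the sub-regions determined by where the posteriors $q_+$, $q_-$, and $q_{+\l}$ sit relative to $q^m$ (in particular the AA-region check for the $\a_G\le\tfrac12$ case), and establishing optimality for $q>p^e$ --- that is, proving the lower bound $\pi(q)$ is genuinely unattainable, which requires showing that \emph{every} signal-based strategy deterring period-1 acting must leak enough reward to a deviator to cost strictly more than $\pi(q)$.
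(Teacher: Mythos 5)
Your proposal takes essentially the same route as the paper's proof: the same candidate strategy (nothing in period 1, reward the positive signal with the split $\splitted{\xi(q);1}$), an equivalent deviation analysis, a threshold where the deviation constraint starts to bind, and the chord/convexification construction above it. Your deviation calculus is correct and is the paper's test in disguise: writing $u(q)=q-(1-q)c$, one has $D(q)>\pi(q)$ iff $q_{+\l}<q^m$ iff $u(q)/(1-\a_G)>\pi(q)$, and your identity $D(q)-\pi(q)=(1-q)\left(c\a_L-\a_G\tfrac{\xi(q)}{1-\xi(q)}\right)$ is algebraically right.

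The genuine gap is the step you postpone in the case $\a_G\le\tfrac12$: the check that ``no profitable deviation arises in the AA-region $(q^{ii},p^*)$.'' That check fails, and your own formula shows it. On $(q^{ii},p^*]$ the deviator acts after the negative signal (since $q^->q^m$ there), and acting after \emph{every} period-2 history still yields exactly $2u(q)=\pi(q)$; hence the deviation gain is again $\max\{0,\,(1-q)c\a_L-\a_G(q-\pi(q))\}$, i.e.\ your formula remains valid in this region. Since $\xi(q)\to 0$ as $q\to p^*$, the gain tends to $(1-p^*)c\a_L>0$ whenever $\a_L>0$. Concretely, take $\a_G=0.4$, $\a_L=0.2$, $c=1$, $q=0.64\in(q^{ii},p^*)\approx(0.571,0.667)$: compliance pays $\pi(q)=0.56$ (here $\gamma=0.875$), while acting in period 1 pays $0.28+0.224+0+0.096=0.6$. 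So the transplanted action-based profile is \emph{not} an equilibrium on the upper part of the AA-region, nor at the realization $p^*$ when $q>p^*$, and part (a) cannot be proven in the form you (and the theorem) state it. Be aware that the paper's own proof shares this blind spot: condition \eqref{eq:1-condition} is derived for a deviator who acts in period 2 only after $(+,g)$, silently dropping the term $q(1-\a_G)-(1-q)(1-\a_L)c$, which is positive precisely on $(q^{ii},1)$. What survives is the value claim: $\N{S}(q)=\pi(q)$ still holds for $\a_G\le\tfrac12$, because the sender can split $q$ in period 1 into $\splitted{\tilde q;1}$, where $\tilde q\in[q^{ii},p^*)$ solves $\a_G\tilde q=(1-\tilde q)c(2\a_G-\a_L)$ (equivalently, $q_{+\l}=q^m$ at $\tilde q$), and run the period-2 scheme at $\tilde q$; since $\pi$ is linear on $[q^{ii},1]$, the chord cost equals $\pi(q)$. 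But then the equilibrium strategies no longer coincide with the action-based ones, so both your check and the statement it targets need amending; by contrast, your treatment of $\a_G>\tfrac12$ is unaffected, because there all compliance occurs at beliefs below $q^{ii}$.

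A second, acknowledged but unfilled, hole: for $q>p^e$ you assert that every deterring signal-based strategy ``must leak enough reward'' to cost more than $\pi(q)$, but you give no argument and do not rule out non-extreme (multi-posterior) splits. The paper settles this by computing the full period-2-only cost function \eqref{equ:C} regime by regime, convexifying it, and reducing arbitrary messaging strategies to extreme ones (Lemma~\ref{lemma:appendix} and the closing paragraph of the proof); some version of that reduction is unavoidable in your write-up as well.
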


%\begin{proof}
%    See appendix.
%\end{proof}  

%Theorem \ref{theorem:signal-based} suggests that under certain conditions, namely when $\alpha_G<1/2$ or $q\leq p^e$, there is no practical advantage in allowing the sender to condition information on actions rather than signals.

Theorem 2 highlights that in certain circumstances, that is,
when $\alpha_G\leq 1/2$ or $q\leq p^e$, allowing the sender to base information on actions does not confer any practical advantage over relying solely on signals.

To gain some understanding for this result, note that when messages depend on signals, the receiver will obtain exactly the same information from the sender if he chooses R in period 1. However, the receiver does not need to choose R in period 1 to obtain the message later. In fact, even after choosing A, there is a positive probability of obtaining the information from the sender. 
%When information is based on actions, the receiver must refrain in period 1 in order to obtain the message, and the sender promises sufficient information to induce the receiver to refrain. On the other hand, when information is based on signals, in particular, the positive signal, the receiver need not refrain in period 1 in order to obtain the message. Indeed, if he refrains in period 1, he will obtain exactly the same information (message) whether it is based on actions or on signals. This is so, because when he refrains, the signal is definitely positive. However, if he acts, the receiver will still obtain the information (message) with positive probability, which depends on $q$ on  $\alpha_G$ and on $\gamma$, the splitting probability. Nevertheless, the message need not be valuable to the receiver. This is because, to utilize it the receiver must forgo his own signal. As it turns out, when $\alpha_G<1/2$ the value to the sender of utilizing his own signal is preferable to ignoring it and relying.
%When information disclosed by the sender depends on observing a positive signal, the receiver needs not refrain from taking action A in period 1 to obtain a message later. In fact, even after taking A, there is still a positive probability of obtaining information from the sender. 
This probability depends on several factors, including $q$, $\alpha_G$, and $\gamma$, reflecting the splitting probability. In situations where $\alpha_G<1/2$ or when $q$ is relatively low, the probability, and hence the value of receiving an informative message after selecting A, 
are particularly low. In such cases, it is better for the receiver to disregard the message conveyed by the sender, and instead rely in period 2 solely on the signal obtained by his own actions. 
Indeed, in the signal-based scenario, if the receiver chooses A in period 1, he can either use the signal generated by his action and disregard the message delivered by the sender, or conversely, rely on the message from the sender and ignore the self-generated signal, but he cannot use both the signal and the message.\footnote{This stands in sharp contrast to the Unconditional Information Provision scenario, where messages from the sender can be utilized together with signals generated by actions, and from the Action-Based Information Provision scenario, where messages are never sent if the receiver chooses A in period 1.} 
When the receiver chooses A in period 1 and is better off ignoring the message in period 2 and relying on his signal, he obtains the payoff $\pi(q)$.
To discourage the receiver from choosing A in period 1, the sender must provide conditional information that results in a payoff to the receiver of at least $\pi(q)$. This constraint is precisely what the sender faces in the action-based scenario. Furthermore, as pointed out, when the receiver abstains from choosing A in period 1, the information (message) obtained remains identical regardless of whether it is based on actions or signals. Consequently, to satisfy this constraint, the sender must employ the same strategy. 

When $\alpha_G>1/2$ and $q > p^e$, the Signal-Based Information Provision is still more effective than the Unconditional Information Provision, but less effective than the Action-Based Information Provision. 
This is so precisely because in the signal-based scenario the receiver can obtain and utilize the information promised by the sender even when he chooses A in period 1, something which is impossible in the action-based scenario. 

%When $\alpha_G>1/2$ and $q > p^e$, conditioning information revelation on the public signal is less effective than conditioning it on the receiver's actions, but more effective than unconditional information revelation. However, the equilibrium strategies in both the Action-Based and Signal-Based scenarios exhibit similar characteristics.
In both the action-based and the signal-based scenarios, the sender chooses to defer information revelation to period 2 and, if the promise of future information alone is insufficient to influence the receiver's behavior in period 1, the sender provides additional information in period 1. 

In all scenarios, when the sender opts to disclose information in period 1, she splits the prior between the upper limit of 1 and a certain level $q^*$. The magnitude of $q^*$ varies across the different scenarios, providing interesting insights. Specifically, the unconditional scenario exhibits the lowest value of $q^*$, namely $q^i$, reflecting the highest level of information disclosure, and consequently the highest loss to the sender. On the other hand, the action-based scenario demonstrates the highest value of $q^*$, namely $p^*$, indicating the lowest level of information disclosure, and consequently the lowest loss to the sender. Accordingly, we observe the ordering $q^i < p^e < p^*$, which reflects the varying levels of $q^*$ among the three scenarios when the sender opts to disclose information in period 1. 

Figure \ref{fig:final} illustrates  this ordering in the case where $\alpha_G>1/2$ and $p^*<q^{ii}$. Here, the dashed black line, reflecting the unconditional scenario split, is above the red line which reflects the signal-based scenario split in period 1. The latter is further above the blue line, which represents the action-based scenario split in period 1.

\begin{figure}
\centering
\includegraphics[width=.75\linewidth]{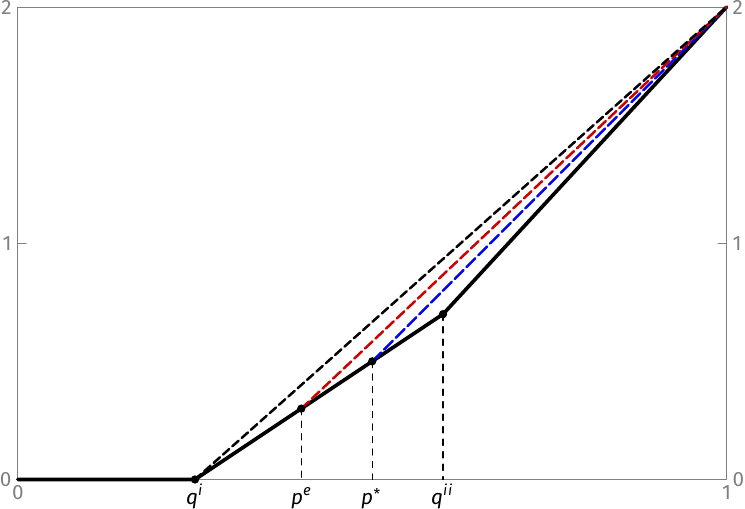}
\caption{Optimal splitting with Action-Based (dashed blue line), Signal-Based (dashed red line) or Unconditional Information (dashed black line)}
\label{fig:final}
\end{figure}

A further difference between the action-based and the signal-based scenarios is that in the former case providing information in period 1 is always accompanied by full revelation in period 2 (formally, $p^*$ is split into $\splitted{0;1}$).  However, in the signal-based scenario, the situation is different as long as the equilibrium strategies do not coincide. Here, in period 1 the prior is split into  $\splitted{p^e;1}$. When the receiver's belief in period 2 is $p^e$, since $p^e$ is always less than $p^*$,  the optimal revelation by the sender still leaves the receiver with some residual uncertainty regarding the actual state. This uncertainty is formally represented by the split of $p^e$ in period 2 into $\splitted{\xi(p^e);1}$ (since $p^e<p^*$, $\xi(p^e)\geq 0$).

%%%%%%%%%%%%%%%%%%%%%%%%%%%%%%

\section{Summary}
\label{sec:conclusion}
\paragraph{An overview.}
In this paper we incorporate Bayesian persuasion into a bandit exploration problem.
We analyse a dynamic persuasion model that accounts for the possibility that the receiver partially learns about the true state of nature through his actions, which may harm the sender. In this model, the receiver considers not only the payoff, but also the informational value of his actions. The sender's objective is to minimize the number of times the receiver chooses the harmful (and informative) action. We distinguish three scenarios. The first is where the sender provides information unconditionally. The second is where the sender can condition the information provision on the actions of the receiver, 
and in the third scenario the sender can condition information based only on the public signals. Table \ref{tab:summary} summarises our main results.

\definecolor{lightgray}{gray}{0.95}
\newcommand{\conditional}%
  {\begin{tabular}[c]{l}R if $\mu_2=\ell$\\ A if $\mu_2=g$\end{tabular}}
\begin{table}
\footnotesize
\caption{A summary of equilibrium strategies}
\label{tab:summary}
\makebox[\textwidth][c]{
\begin{tblr}{width=1.2\linewidth,
      colspec={|Q[m,l,.6]Q[m,r,.5]|
      Q[m,l,.5]|Q[m,l,.5]|Q[m,l,.5]|Q[m,l,.5]|
      Q[.3,m,l]|},
    rows={ht=5mm},hlines,hline{4,7,9,12,16}={1.5pt},
    column{2-7}={c},cell{5,8,11,14}{7}={lightgray}}
\SetCell[r=3]{b} Variation
& \SetCell[r=3]{b}Prior Range
& \SetCell[c=4]{c}Strategies
&&&
& \SetCell[r=3]{b}Lower Bound $N=\pi$ \\
& & \SetCell[c=2]{c}Sender&& \SetCell[c=2]{c}Receiver && \\
& & period 1 & period 2 & period 1 & period 2 & \\
%%%%%%%%%%%%%%%%%%%%%%%%%%%%%% 
\SetCell[r=3]{m}No Information (benchmark)
& $0< q\le q^i$
& \SetCell[r=3,c=2]{c,m} No Information &
& R & R
&  Yes \\
%%%%%%%%%%%%%%%
& $q^i< q\le q^{ii}$
&& 
& A & {R if signal ($-$)\\ A if signal ($+$)}
& \SetCell[r=2]{m} No \\
%%%%%%%%%%%%%%%
& $q^{ii}< q< 1$
&&
& A & A
&  No \\
%%%%%%%%%%%%%%%%%%%%%%%%%%%%%%%
\SetCell[r=2]{m}Unconditional Information Provision
& $0<q\le q^{i}$
& \SetCell[c=2]{c} No Information &
& R & R
&  Yes \\
%%%%%%%%%%%%%%%
& $q^{i}<q<1$
& {Split $q$\\ into $\splitted{q^{i};1}$} & No Information
& \SetCell[c=2]{c}  {RR if $\mu_1=\ell$, AA if $\mu_1=g$} &
& No \\
%%%%%%%%%%%%%%%%%%%%%%%%%%%%
\SetCell[r=3]{m}Action-Based Information Provision
& $0 < q\le q^i$
& \SetCell[c=2]{c} No Information &
& R & R 
& Yes \\
%%%%%%%%%%%%%%%%
& $q^i<q \le p^*$
& No Information & {(*) Split $q$ \\ into $\splitted{\xi(q);1 }$}
& R & {R if $\mu_2=\ell$\\ A if $\mu_2=g$}
& Yes \\
%%%%%%%%%%%%%%%%
& $p^*<q < 1$
& {Split $q$\\ into $\splitted{p^*;1 }$} & {(*) Split $p^*$\\ into $\splitted{0;1}$}
& \SetCell[c=2]{c} {R if $\mu_1=\ell$, then \conditional\\ AA if $\mu_1=g$ }&
& Only if $p^*>q^{ii}$ \\
%%%%%%%%%%%%%%%%%%%%%%%%%%%%
\SetCell[r=3]{m} Signal-Based Information Provision ($\alpha_G> 1/2$)
& $0<q\le q^i$
& \SetCell[c=2]{c} No Information &
& R & R
& Yes\\
%%%%%%%%%%%%%%%
& $q^i<q\le p^e$
& No Information & {(**) Split $q$\\ into $\splitted{\xi(q);1}$}
& R & \conditional
& Yes\\
%%%%%%%%%%%%%%%
& $p^e<q<1$
& {Split $q$\\ into $\splitted{p^e;1}$} & {(**) Split $p^e$\\ into $\splitted{\xi(p^e);1}$}
& \SetCell[c=2]{c} {R if $\mu_1=\ell$, then \conditional\\ AA if $\mu_1=g$}&
%& R if $\ell$, AA if $g$ & R if $\ell$, A if $g$
& No\\
%%%%%%%%%%%%%%%%%%%%%%%%%%%%
($\alpha_L\le 1/2$)
& $0< q \le 1$
& \SetCell[c=5]{c} Like the Action-Based Information Provision\\
\end{tblr}}
\smallskip

\noindent (*) = conditional on the receiver choosing R in period 1.\\
\noindent (**) = conditional on the signal in period 1 being positive.
%(**) = when the split in period 1 hasn't realized to 1 (i.e., the message hasn't fully revealed state G), conditional on the receiver choosing R in period 1.
    
\end{table}

Our findings indicate that, among the three scenarios, the best equilibrium outcome, from the sender's perspective,  occurs when she can condition the information she provides on the actions taken by the receiver. The worst equilibrium outcome is in the case where the sender cannot condition her information provision on anything, except for the prior.

Although the scenarios of the Action-Based and the Signal-Based Information Provision may differ in terms of efficiency, they share a common feature. Namely, in both scenarios, depending on the prior beliefs, the optimal provision occurs either exclusively in period 2 or in both periods, but it never occurs solely in period 1. In contrast, in the model where the sender cannot condition information on the receiver's behavior or on the public signal, information is provided only in period 1, and its effectiveness is accordingly limited.

Despite notable differences, our analysis has unveiled several features shared by all the scenarios considered. In each model, the optimal strategy prescribes both the quantity of information provided and the timing of its provision.
In all Stackelberg equilibria, the sender employs a ``clear-message" strategy, while the receiver adopts a ``compliance" strategy. This entails that the sender, with a positive probability, discloses that the state of nature is $G$ or maintains a sufficient level of uncertainty regarding the true state of nature. 
In response, the receiver only takes the detrimental action when the sender reveals that the state of nature is $G$.

\paragraph{Exploratory opportunities are never exploited.} A noteworthy characteristic of our model, independent of what the information provided may rely upon, is that the receiver possesses exploration opportunities. Specifically, the receiver can partially learn about the true state of nature by taking the harmful action in period 1. Surprisingly, in all equilibria, the receiver does not exploit this opportunity. The reason is that the sender commits to disclosing an amount of information that incentivizes the receiver either to refrain in period 1, or to act in both periods when the state $G$ has already been revealed to him by the sender and nothing has been left unknown. Consequently, the receiver relies solely on the information provided by the sender.

\color{black}

\paragraph{Policy implications: law enforcement.}
Our model applies naturally to many two-period one armed-bandit problems, where there is another player who aims at discouraging the decision maker from taking certain actions. 
One primary application of our study is
%However, in this paper we focused in particular on 
law enforcement in general and tax evasion in particular.  

Our findings regarding Action-Based and Signal-Based Information Provision have important implications for tax enforcement policy, and possibly law enforcement policy in general. 
They suggest that, when policies rely on strategies of enforcement communication, authorities should refrain from disclosing full information about enforcement at the outset. Instead, whenever conditional disclosure of information is feasible, a more effective approach would be to gradually reveal information based on the behavior or records of offenders. This strategy implies that individuals with a clean record should be entitled to receive information about enforcement, while those with a negative record should not.

The concept of Signal-Based Information Provision resembles other aspects of law enforcement; 
specifically, the principle that repeat offenders undergo heightened scru\-ti\-ny and receive more severe punishments. For instance, the U.S. Federal Sentencing Guidelines state that ``a defendant with a history of prior criminal behavior is more culpable than a first-time offender and therefore deserves a greater punishment.'' In our model, non-repeat offenders are not granted lighter sentences or reduced enforcement. Instead, they are ``rewarded'' with valuable information about enforcement. By implementing this approach, enforcement authorities can effectively utilize information provision as a tool to incentivize good behavior and deter potential offenders. 

%AVI VERSION. Our analysis focused on an interaction between two players, one sender and one receiver, for example, one enforcer and one offender. A natural extension of our analysis is a situation with one sender and multiple receivers. In such a setting, an important aspect is whether the sender can disclose private information to each receiver or must provide public information that all receiver can obtain. In the former case, our results extend easily, in the latter matters are more complicated, which we leave for future research.

%AVI VERSION. In addition, our analysis focused on situations where the sender aimed at dissuading the receiver from taking actions. An interesting research question is how the analysis and result change if the sender was aiming at encouraging the receiver to take actions. We leave this question too for future research.
\color{black}

\paragraph{Information provision vs.\ monetary transfers.} 
Information provision contrasts with incentives provided through monetary transfers, where similar timing issues are not typically observed. This distinction arises from two key differences.
Firstly, monetary transfers can generally be executed in a single stage, regardless of their magnitude. This eliminates the need for strategic timing considerations.
Secondly, monetary transfers lack the ability to influence future behavior; they have no lasting impact on the receiver's decision-making process. As a result, early and unconditional monetary transfers are often ineffective. In contrast, information disclosure alters the receiver's beliefs, thereby affecting his future actions.

\paragraph{Future directions.}
Our analysis focused on an interaction between two players: a sender and a receiver, akin to an enforcer and an offender. A natural extension of this analysis involves scenarios with one sender and multiple receivers. In such settings, a crucial consideration is whether the sender can disclose private information to individual receivers or must provide public information accessible to all receivers. 

In the case of private information disclosure, our findings extend seamlessly. The sender can strategically tailor the information provided to each receiver, potentially influencing their decisions and achieving the desired outcome. 
When public information is the only option, the dynamics become more complex, presenting an intriguing area for future research.

Our analysis concentrated on scenarios in which the sender sought to dissuade the receiver from taking actions. Another interesting avenue for future research involves investigating how the analysis and results would vary if the sender's intention was to encourage the receiver to take actions.
\color{black}
\setlength{\bibsep}{2pt}
\bibliographystyle{jpe}
\bibliography{references}

\newpage
\section*{Appendix: Proofs}
\label{sec: appendix}

\renewcommand{\theequation}{A.\arabic{equation}}
\renewcommand{\thefigure}{A.\arabic{figure}}
\setcounter{equation}{0}
\setcounter{figure}{0}

\subsection*{Proof of Lemma~\ref{lemma 1}}
Uniqueness of $p^*$ follows from the fact that both functions are continuous,  $\pi(q)$ is convex, $q$ is linear, $q > \pi(q)$ for $q > 0$ sufficiently small, and  $\pi(1)=2$ (see also Figure~\ref{fig:2}).
For $p \geq q^{ii} $ we have $\pi(q)= 2(q - (1-q)c)$. Thus, when $p^* \geq q^{ii} $, $p^*=\pi(p^*)$ solves to 
\begin{equation}p^*=\frac{2c}{1+2c}.\label{p*-1}\end{equation}
Together with \eqref{equ:q2}, this implies $2\a_G - \a_L \le 1$.
\medskip

When $q^{i}< p < q^{ii}$, we have $\pi(q)= q(1+\a_G) - (1-q)(1+\a_L)c$. Thus, 
when $p^* < q^{ii}$, 
$p^*=\pi(p^*)$ solves to 
\begin{equation}p^*=\frac{(1+\a_L)c}{\a_G + (1+\a_L)c}.\label{p*-2}\end{equation}
Together with ~\eqref{equ:q2}, this implies $2\a_G-\a_L>1$, proving point \ref{lemma:a}. 

Point \ref{lemma:b} of the Lemma (i.e., $p^*>q^m$) is proven by comparing \eqref{p*-1} and \eqref{p*-2} with \eqref{qm}.

Point \ref{lemma:c} of the Lemma (i.e., $q <p^*$ implies $q\geq \pi(q)$) follows from the fact that $\pi(1)>1$. 
\qed

\bigskip

In the proofs of Proposition~\ref{theorem: unconditional}
and Theorems~\ref{theorem:observe} and~\ref{theorem:signal-based},
we will use the following observations.

\begin{enumerate}[label=\arabic*)]
\item
Information revealed in period 1 changes the receiver's belief and serves as a convexification devise. We will therefore study the related problem in which the sender can send information only in period 2, and use revelation in period 1 to convexify the sender's payoff. 
\item
If the receiver acts in period 1, the sender's loss is at least 1, while if the receiver refrains in period 1, the sender's loss is at most 1. Hence, the sender's optimal strategy will attempt to cause the receiver to refrain from acting in period 1.
\end{enumerate}

\medskip

%\subsection*{Proof of Proposition~\ref{theorem: unconditional} \MAX{This title appears twice}}

We start by defining a simple class of sender's strategies, called \emph{extreme strategies}, and showing that in some cases it is sufficient to restrict attention to these strategies.

To simplify notation, we denote $u(q)= q-(1-q)c$, and recall that $u(q^m)=0$.
The quantity $u(q)$ is the receiver's expected stage payoff when he acts and his belief is $q$.

\begin{definition}[Extreme strategy]
We define ``extreme'' a sender's strategy that reveals no information in period 1, while in period 2 it:
\begin{itemize}
\item sends the message $g$ with probability $\gamma$ when the state is $G$
(and the message $\ell$ with probability $1-\gamma$);
\item sends the message $\ell$ with probability 1 when the state is $L$.
\end{itemize}
\end{definition}
An extreme strategy splits the belief $q$ of the receiver at the end of period 1 into $\splitted{\frac{(1-\gamma)q}{1-\gamma q},1}$.\footnote{Compare this with our definition of split in the main text in relation to equations (\ref{eq: split}).} 

We state the following:

\begin{lemma}\label{lemma:appendix}
  For each given $q \in [0,1]$, consider the class $\Sigma(q)$ of sender's strategies that (1) reveal no information in period 1, and (2) against which the receiver has an optimal response that refrains in period 1.  For each strategy $\sigma_S\in\Sigma(q)$ there exists an extreme strategy in $\Sigma(q)$ that is at least as good as $\sigma_S$ for the sender.
\end{lemma}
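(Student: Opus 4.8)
The plan is to fix a strategy $\sigma_S\in\Sigma(q)$ and show that it can be replaced, without loss to the sender, by an extreme strategy. The key observation is that, by hypothesis, the receiver has an optimal response that refrains in period~1, so along the equilibrium path the sender only ever acts through her period-2 message. Since $\sigma_S$ reveals nothing in period~1 and the receiver refrains there, the receiver's period-1 belief is unchanged at $q$, and the receiver does not generate a signal in period~1. Hence the only thing that matters for both players' payoffs is the conditional distribution of the period-2 message given the state, which induces a Bayes-plausible split of $q$ into some pair of posteriors. First I would argue that, under the maintained assumption that the receiver refrains in period~1, the receiver's continuation behavior in period~2 depends on $\sigma_S$ only through the posterior induced by the realized message, and the sender's cost equals the expected number of times the receiver acts in period~2.

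Next I would reduce the period-2 message structure to a binary split whose high posterior is the certainty belief $1$. The idea is that any strategy in $\Sigma(q)$ induces a distribution over posteriors, and the sender's cost is the expectation over these posteriors of the receiver's period-2 acting probability, which is $0$ below $q^m$ and $1$ above $q^m$ (with the receiver acting exactly when his posterior exceeds $q^m$). Because refraining in period~1 must be optimal for the receiver, the value the receiver obtains from the induced posteriors must be at least $u(q)$ worth of informational incentive; I would encode this as a single scalar constraint that the induced distribution of posteriors must satisfy in order to keep refraining optimal. Given this constraint, I would show that pushing all the ``acting'' posterior mass up to the point mass at $1$ (while leaving the complementary mass at the lowest posterior consistent with Bayes plausibility) weakly decreases the sender's cost: moving acting-posterior mass toward $1$ does not increase the probability of acting (it is already $1$ above $q^m$) but it strictly relaxes, or at worst preserves, the receiver's incentive to refrain, so we can shrink the total acting mass.

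The heart of the argument is therefore a one-dimensional optimization: among all Bayes-plausible splits of $q$ that keep the receiver refraining in period~1, minimize the sender's expected period-2 cost. I would verify that the extreme strategy, which splits $q$ into $\splitted{\frac{(1-\gamma)q}{1-\gamma q},1}$ and concentrates the ``good'' realization at full certainty, attains this minimum: the low posterior $\frac{(1-\gamma)q}{1-\gamma q}$ can be taken below $q^m$ so that the receiver refrains in both periods after $\ell$, and the only acting occurs after $g$, which is the cheapest way to deliver a given amount of incentive because the message $g$ fully reveals $G$ and so the per-unit-cost informational reward to the receiver is maximal there. Concretely, I would compare an arbitrary $\sigma_S$ with the extreme strategy having the same probability of triggering action in period~2, and show the extreme strategy delivers at least as much incentive to refrain in period~1 (hence is feasible whenever $\sigma_S$ is) while incurring the same or smaller cost.

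The main obstacle I anticipate is handling the receiver's incentive constraint cleanly. The subtlety is that ``refraining in period~1 is optimal'' is not automatic once we modify the strategy: shifting posterior mass changes the receiver's value from refraining (which is the expected continuation value under the sender's messages) as well as the value of the deviation to acting in period~1 (which lets the receiver combine his own signal with the later message). I would need to confront the deviation value carefully and show that the extreme strategy does not make deviating more attractive than it was under $\sigma_S$. The most delicate point is that acting in period~1 gives the receiver an independent signal, so I must verify that concentrating the reward at the fully revealing message $g$ does not inadvertently raise the value of the explore-then-exploit deviation; I expect this to follow from the fact that once the state is fully revealed there is no residual informational value to the private signal, so the extreme strategy weakly dominates any partially revealing alternative both in cost and in deterrence.
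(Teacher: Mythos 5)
Your proposal is correct in spirit and rests on the same core mechanism as the paper's proof: replace an arbitrary strategy in $\Sigma(q)$ by one that concentrates the ``acting'' posteriors at $1$ and the remaining posteriors at a single point weakly below $q^m$, then verify that the receiver's incentive to refrain in period 1 is preserved against the relevant deviation (act in period 1, then combine the private signal with the sender's message). Your key remark --- that once the state is fully revealed the private signal has no residual value --- is exactly the paper's central step: conditional on a message inducing a posterior $q_j>q^m$, the deviator's continuation value weakly exceeds the complier's value $u(q_j)$ (the deviator can always act after both signals, and dropping the negative-signal branch only helps), whereas after $q_j$ is further split into $\splitted{0;1}$ the two conditional values coincide; hence the conditional gap shrinks and the global incentive inequality survives. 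The execution differs: the paper proceeds by three local modifications (split each posterior above $q^m$ into $\splitted{0;1}$; shift mass at posteriors below $q^i$; average the posteriors in $[q^i,q^m]$), while you propose a single global swap to the extreme strategy with the same acting probability. Your one-shot comparison can be made to work and is arguably cleaner.

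However, there is one genuine gap in your incentive-preservation argument. Your construction does two things simultaneously: it pushes the high posteriors up to $1$ \emph{and} it consolidates all sub-$q^m$ posteriors at a single point $r=(q-P)/(1-P)$ (where $P$ is the acting probability), which lies weakly below their original mean. Your stated justification covers only the first operation. The second operation also moves the deviator's value: conditional on a message inducing posterior $p$, the deviator's period-2 value is $\psi(p)=\mathbb{P}(+\mid p)\max\{0,u(p^+)\}+\mathbb{P}(-\mid p)\max\{0,u(p^-)\}$, where $p^{\pm}$ are the signal-updated beliefs, and your swap replaces $\sum_{\text{low}}\beta_j\psi(q_j)$ by $(1-P)\psi(r)$. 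That this does not \emph{increase} the deviation value requires $\psi$ to be convex and nondecreasing in the posterior (it is: $\psi$ is a sum of maxima of increasing linear functions of $p$, so Jensen plus $r\le$ the low mean gives the inequality); if $\psi$ were concave, consolidation would raise the deviator's value and your claim that the swap ``at worst preserves'' the incentive would fail. This is precisely the work done by the paper's Steps 2 and 3, there via the linearity of the signal-contingent payoff in the posterior on $[q^i,q^m]$ and an analogous mass-shifting argument below $q^i$. With that convexity step added, your comparison closes: the compliance value rises from $\sum_{q_j>q^m}\beta_j u(q_j)$ to $P$, the deviation value rises by at most the same amount on the high branch (since $\psi(q_j)\ge u(q_j)$) and falls on the low branch, and the sender's cost is unchanged at $P$, which proves the lemma.
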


\begin{proof}
Fix an arbitrary strategy $\sigma_S\in\Sigma(q)$ throughout the proof. 
If the receiver refrains in the first period, the sender's message will involve posteriors $(q_j)_{j=1}^n$ with corresponding probabilities $(\b_j)_{j=1}^n$, with $n\ge1$. We necessarily have $\sum_{j=1}^n\b_jq_j=q$ and  $\sum_{j=1}^n\b_j=1$.
  
%Under the optimal strategy $\sigma_R$, in period 2, the receiver will refrain if and only if $q_j<q^m$.

Let $\sigma_R$ be an optimal receiver's response to $\sigma_S$ such that the receiver refrains in period 1. Because $\sigma_R$ is optimal, any alternative strategy $\sigma'_R$, where the receiver acts in period 1, does not yield to the receiver a higher payoff. 
When the receiver acts in period 1, he observes a signal in addition to the sender's message. 
Each $q_j$ induces two possible posteriors, denoted $q_j^+$ (when the signal was positive) and $q_j^-$ (when the signal was negative). These are given by:
\begin{equation}
\label{equ:64}
q_j^+=\frac{q_i\alpha_G}{q_i\alpha_G + (1-q_i)\alpha_L} \quad\text{and}\quad 
q_j^-=\frac{q_i(1-\alpha_G)}{q_i(1-\alpha_G) + (1-q_i)(1-\alpha_L)}.
\end{equation}

Let $\sigma'_R$ be the strategy that prescribes to act in period 1, and to do so in period 2 only if the receiver's posterior is above $q^m$.  This strategy is optimal for the receiver among all the strategies that prescribe to act in period 1.%
\footnote{Note that the definition of $\sigma'_R$ relies on beliefs rather than signals. This choice is made to streamline the proof and facilitate its presentation.}  Under both strategies $\sigma_R$ and $\sigma'_R$, the receiver acts in period 2 only if the posterior is higher than $q^m$.

The idea of the proof is to iteratively change $\widehat\sigma_S$ by eliminating messages that correspond to various posteriors,
until we are left with two posteriors: 1 and some posterior between $q^i$ and $q^m$.
In Step~1 we consider a given posterior $q_j > q^m$,
and we show that instead of sending the message that corresponds to this posterior,
the sender could have split $q_j$ into $\splitted{0,1}$,
without affecting the property that $\sigma_R$ is better for the receiver than $\sigma'_R$.
In Step~2 we consider a given posterior $q_j < q^i$,
and we show that by properly lowering the probability of reaching the posterior $q_j$, lowering the probability of reaching any posterior $q_k$ (with $q_k > q^i)$, and increasing the probability of reaching the posterior $q_i$,
we still do not affect the property that $\sigma_R$ is better for the receiver than $\sigma'_R$.
Finally, in Step~3 we show that by collecting all posteriors $q_j$ in $[q^i,q^m]$ into their average, we still do not affect the property that $\sigma_R$ is better for the receiver than $\sigma'_R$.

\paragraph{Step 1: Handling posteriors $q_j > q^m$.}

Fix a posterior $q_j > q^m$ that can be attained under $\sigma_S$.  Let $\widehat \sigma_S$ be similar to $\sigma_S$, except that if the message sent to the receiver is the one that leads his belief to $q_j$ (calculated assuming the receiver refrained in period 1), then the sender further splits $q_j$ into $\splitted{0;1}$.

We will show that $\sigma_R$ is weakly better for the receiver than $\sigma'_R$ when facing $\widehat \sigma_S$, so that $\widehat \sigma_S \in \Sigma(q)$.  To this end, denote by $\phi_R(\sigma_S,\sigma_R)$ the receiver's payoff under the strategy pair $(\sigma_S,\sigma_R)$, by $\phi_R(\sigma_S,\sigma_R \mid q_j)$ the receiver's payoff in period 2 conditioned on the sender's message corresponding to the posterior $q_j$, and by $\phi_R(\sigma_S,\sigma_R \mid \lnot q_j)$ the receiver's payoff in period 2 conditioned that the sender's message is not the one corresponding to the posterior $q_j$.  We use analogous notation for the strategies $\widehat\sigma_S$ and $\sigma'_R$.

The definition implies that
\begin{eqnarray}
\label{equ:e1}
\phi_R(\sigma_S,\sigma_R) &=& \beta_j \phi_R(\sigma_S,\sigma_R \mid q_j) + (1-\beta_j)  \phi_R(\sigma_S,\sigma_R \mid \lnot q_j),\\
\label{equ:e2}
\phi_R(\sigma_S,\sigma'_R) &=& u(q) + \beta_j \phi_R(\sigma_S,\sigma'_R \mid q_j) + (1-\beta_j)  \phi_R(\sigma_S,\sigma'_R \mid \lnot q_j),\\
\label{equ:e3}
\phi_R(\widehat \sigma_S,\sigma_R) &=& \beta_j \phi_R(\widehat \sigma_S,\sigma_R \mid q_j) + (1-\beta_j)  \phi_R(\widehat \sigma_S,\sigma_R \mid \lnot q_j),\\
\label{equ:e4}
\phi_R(\widehat \sigma_S,\sigma'_R) &=& u(q) + \beta_j \phi_R(\widehat \sigma_S,\sigma'_R \mid q_j) + (1-\beta_j)  \phi_R(\widehat \sigma_S,\sigma'_R \mid \lnot q_j).
\end{eqnarray}
Since at $q_j$ the strategy $\widehat \sigma_S$ splits $q_j$ into $\splitted{0;1}$,
and in period 2 both $\sigma_R$ and $\sigma'_R$ act at the belief 1 and refrain at the belief 0,
we have
\begin{equation}
\label{equ:e7}
\phi_R(\widehat\sigma_S,\sigma_R \mid q_j) = \phi_R(\widehat\sigma_S,\sigma'_R \mid q_j).
\end{equation}
Since $\sigma_S$ and $\widehat \sigma_S$ coincide in period 1, 
as well as in period 2 at beliefs different than $q_j$,
we have
\begin{eqnarray}
\label{equ:e8}
\phi_R(\widehat\sigma_S,\sigma_R \mid \lnot q_j) &=& \phi_R(\sigma_S,\sigma_R \mid \lnot q_j),\\
\label{equ:e9}
\phi_R(\widehat\sigma_S,\sigma'_R \mid \lnot q_j) &=& \phi_R(\sigma_S,\sigma'_R \mid \lnot q_j).
\end{eqnarray}

We argue that
\begin{equation}
\label{equ:e5}
\phi_R(\sigma_S,\sigma_R \mid q_j) \leq \phi_R(\sigma_S,\sigma'_R \mid q_j).
\end{equation}
Indeed, under $(\sigma_S,\sigma_R)$, at the belief $q_j$, the receiver acts in period 2 (because $q_j > q^m$).  Under $(\sigma_S,\sigma'_R)$, the belief $q_j$ is split to $q_j^+$ and $q_j^-$.  Because $q_j^+ > q_j > q^m$, under $(\sigma_S,\sigma'_R)$ the receiver acts at the belief $q_j^+$.  If under $(\sigma_S,\sigma'_R)$ the receiver acts at $q_j^-$ as well, then \eqref{equ:e5} holds with equality.  Suppose then that under $(\sigma_S,\sigma'_R)$ the receiver refrains at $q_j^-$.  This means that $q_j^- \leq q^m$, so that the receiver's payoff upon acting at $q_j^-$ is non-positive.  The linearity of the payoff function implies that the receiver's payoff upon acting at $q_j$ is a weighted average of his payoff upon acting at $q_j^+$ and his payoff upon acting at $q_j^-$.  This, together with the non-positiveness of the payoff upon acting at $q_j^-$, implies that \eqref{equ:e5} holds (with inequality) in this case.

Since $\sigma_S \in \Sigma(q)$, it follows that 
\begin{equation}
\label{equ:e10}
\phi_R(\sigma_S,\sigma_R) \geq \phi_R(\sigma_S,\sigma'_R).
\end{equation}
This, together with \eqref{equ:e1}, \eqref{equ:e2}, and~\eqref{equ:e5}
implies that 
\begin{equation}
\label{equ:e6}
(1-\beta_j) \phi_R(\sigma_S,\sigma_R \mid \lnot q_j) \geq u(q) + 
(1-\beta_j)  \phi_R(\sigma_S,\sigma'_R \mid \lnot q_j).
\end{equation}
Equations~\eqref{equ:e3}, \eqref{equ:e4}, \eqref{equ:e7}, \eqref{equ:e8}, \eqref{equ:e9}, and \eqref{equ:e6} imply that
\begin{equation}
\phi_R(\widehat\sigma_S,\sigma_R) \geq \phi_R(\widehat\sigma_S,\sigma'_R),
\end{equation}
and therefore $\widehat\sigma_S \in \Sigma(q)$.
Under $(\widehat\sigma_S,\sigma_R)$ the receiver acts in period 2 with lower probability than under $(\sigma_S,\sigma_R)$,
and hence $\widehat\sigma_S$ is better for the sender than $\sigma_S$.

%Applying the above argument iteratively to all posteriors $q_j$ larger than $q^m$,
%we deduce that for every sender's strategy $\sigma_S \in \Sigma(q)$
%there is another strategy that 
%(i) is at least as good to the sender as $\sigma_S$, and 
%(ii) only has one posterior above $q^m$ in period 2, and this posterior is 1. 

\paragraph{Step 2: Handling posteriors $q_j < q^i$.} 
%It is left to handle posteriors $q_j < q^i$.  Fix a posterior $q_j < q^i$, and let $q_k > q^i$ be arbitrary.
Let $q_k$ be such that $q_k > q^i$.
Such $q_k$ exists because the weighted average of the posteriors is $q$, which is higher than $q^i$.
Let $\delta$ solve $q^i = \delta q_{k} + (1-\delta) q_{j}$.
Fix $\ep > 0$ such that $\delta \ep \leq \beta_{k}$ and $(1-\delta)\ep \leq \beta_j$.
We define the strategy $\widehat\sigma_S$ to be similar to $\sigma_S$, except that it lowers the probabilities to split to $q_j$ and $q_{k}$, and increases the probability to split to $q^i$.  
That is, $\widehat \sigma_S$ is the sender's strategy that provides no information in period 1 and in period 2 sends 
messages with the following properties:
\begin{itemize}
\item for each $h \in \{1,2,\ldots,n\}\backslash\{j,k\}$, with probability $\beta_{h}$ the receiver's posterior belief in period 2 is $q_{h}$; 
\item with probability $\beta_j - (1-\delta) \ep$ the receiver's posterior belief in period 2 is $q_{j}$;
\item with probability $\beta_{k} - \delta \ep$ the receiver's posterior belief in period 2 is $q_{k}$;
\item with probability $\ep$ the receiver's posterior belief in period 2 is $q^i$.
\end{itemize}
The choice of $\ep$ implies that such a strategy exists.  In words, relative to $\sigma_S$, the strategy $\widehat \sigma_S$ decreases slightly the probability of the posteriors $q_j$ and $q_{h}$, and introduces (or increases the probability of) the posterior $q^i$.  An analogous argument to that in Step~1 shows that $\sigma_R$ is at least as good to the receiver as $\sigma'_R$ when facing $\widehat\sigma_S$.  As a result, $\widehat \sigma_S$ is better for the sender than $\sigma_S$ when $q_{k} > q^m$, and the two strategies perform equally well when $q_{k} \in [q^i,q^m]$.

\paragraph{Step 3: Handling posteriors $q_j \in (q^i,q^m]$.}

Let $\calJ = \bigl\{ j \in \{1,2,\ldots,n\} \colon q^j \in [q^i,q^m]\bigr\}$, let $\overline q$ be the weighted average of all posteriors $(q_j)_{j \in \calJ}$, and let $\overline \beta$ be the sum of the probabilities corresponding to these posteriors:
\begin{equation} 
\overline{q} = \frac{\sum_{j \in \calJ}\beta_j q_j}{\sum_{j \in \calJ}\beta_i}, \qquad \overline \beta = \sum_{j \in \calJ}\beta_i. 
\end{equation}
Consider the sender's strategy $\widehat \sigma_S$ that instead of splitting the receiver's belief to $(q_j)_{j=1}^n$, replaces all beliefs $q_j\le q^m$ by their average $\overline{q}$.  
That is, the strategy $\widehat\sigma_S$ splits $q$ to $(q_j)_{j \not\in \calJ}$ (with probabilities $(\beta_j)_{j \not\in \calJ}$) and $\overline{q}$ (with probability $\overline{\beta}$).

We argue that $\sigma_R$ is at least as good for the receiver as $\sigma'_R$ against $\widehat \sigma_S$.  Indeed, at all $(q_j)_{j\not\in \calJ}$ and at $\overline{q}$, the receiver behaves similarly under $\sigma_R$ (resp.,~under $\sigma'_R$): he refrains in all of them under $\sigma_R$, and he refrains in $q_j^-$ and acts in $q_j^+$ under $\sigma'_R$.  The linearity of the payoff implies that since $\sigma_R$ is at least as good for the receiver as $\sigma'_R$ when facing $\sigma_S$, the same relation holds also when facing $\widehat\sigma_S$.\end{proof}

\subsection*{Proof of Proposition~\ref{theorem: unconditional}}

\paragraph{Step 1: The sender's optimal loss is at most $\N{U}$.}
The sender's payoff in the benchmark case, in which the sender provides no information to the receiver,
is the function $N$ displayed in red in Figure~\ref{fig:1}.
Simple algebraic manipulations show that the line that connects $(q^i,N(q^i))$ and $(1,N(1))$, i.e., the dashed blue line in Figure~\ref{fig:1}, passes below the point $(q^{ii},N(q^{ii}))$.
Therefore, the convexification of $N$, namely, the smallest convex function that is smaller than $N$,
is the function $\N{U}$ defined in the statement of the proposition.
This further implies that the sender's strategy that is described in the statement of the proposition,
denoted $\sigma_S^\U$, ensures that the sender's loss is no more than $\N{U}$.

\bigskip

Since the sender's loss $\N{U}$ is the convexification of $N$, the sender's strategy $\sigma_S^\U$ is optimal among all sender's strategies that do not reveal information in period 2. Our goal is to show that $\sigma_S^\U$ is the sender's optimal strategy also among her strategies that do provide information in period 2. To this purpose, we will show that the payoff guaranteed by any sender's strategy that reveals information only in period 2 is not lower than $\N{U}(q)$. Since information provided in period 1 serves as a convexification device, and since $\N{U}$ is convex, this will imply that $\sigma_S^\U$ is indeed optimal.

We identify different cases which we handle in turn.

\paragraph{Step 2: The case $q \le q^i$.} In this case the receiver's best response to  $\sigma_S^\U$ is RR, hence $\sigma_S^\U$ is indeed optimal for the sender. 
    
\paragraph{Step 3: The case $q > q^m$.} In this case the receiver's payoff in period 1 is positive, hence, when facing a sender's strategy that does not reveal information in period 1, he will act in period 1. Any information that the sender provides in period 2 is provided independently of the receiver's behavior in period 1. Since the receiver acts in period 1, providing this information in period 1 instead of in period 2 cannot harm the sender. However, the optimal strategy that reveals information only in period 1 is $\sigma_S^\U$.

\paragraph{Step 4: The case $q \in (q^i,q^m]$.} 

Fix a sender's strategy that reveals information only in period 2, and suppose that the receiver's best response is to act in period 1. 
As in the case of $q>q^m$, the sender could have revealed the information in period 1 without increasing her loss. Yet the strategy $\sigma_S^\U$ is the optimal sender's strategy that reveals information only in period 1.

Hence, to verify that $\sigma_S^\U$ is optimal for $q \in (q^i,q^m]$, it is sufficient to show that a loss lower than $\N{U}(q)$ cannot be obtained by any sender's strategy that involves information revelation only in period 2 and the receiver refraining in period 1.

From Lemma \ref{lemma:appendix} we know that we can restrict our attention to extreme strategies.
Fix then an extreme strategy of the sender,
and denote by $\gamma$ the probability that the message $g$ is sent when the state is $G$.
Let $\sigma_R$ be the receiver's optimal response,
and assume it refrains in period 1. 
Under $\sigma_R$, the receiver acts in period 2 if and only if his belief is 1, hence the payoff of the receiver and the loss of the sender are both $\gamma q$.
For $\sigma_R$ to be an optimal response, $\gamma q$ must exceed the receiver's payoff from alternative strategies in response to the sender's extreme strategy. These strategies are:
\begin{enumerate}[label=(\alph*)]
\item act in period 1, and act in period 2 when the belief in period 2 is 1.
\item act in period 1, and act in period 2 when the belief in period 2 is 1 or when the signal in period 1 was positive.
\end{enumerate}
Since $q \leq q^m$, the strategy (a) gives the receiver a payoff not higher than $\sigma_R$. As to (b), it gives the receiver
\begin{equation}\label{eq:payoff-strategy-b}
 q-(1-q)c + q\alpha_G - (1-q)\alpha_L c +q(1-\alpha_G)\gamma.
\end{equation}
For the sender, the best extreme strategy is characterised by the minimum $\gamma$ such that $q\gamma$ (the payoff from $\sigma_R$) is not lower than \eqref{eq:payoff-strategy-b}. Denoting the minimum level of $\gamma$ by $\widehat\gamma(q)$, the payoff from the extreme strategy $\sigma_R$ turns out to be:
\begin{equation}
\label{equ:981} q\widehat\gamma(q) = \frac{q-(1-q)c + q\alpha_G - (1-q)\alpha_L c}{\alpha_G},
\end{equation}
and simple algebraic manipulations show that $q \ge q^i$ implies $q\widehat\gamma(q) \ge \N{U}(q)$, proving our result.

\subsection*{Proof of Theorem~\ref{theorem:observe}}

\paragraph{Proof of Part (a).}
As in the proof of Proposition~\ref{theorem: unconditional},
when $q \leq q^i$ and the sender reveals no information,
the receiver's best response is to refrain in both periods,
which is the optimal outcome for the sender.

\paragraph{Proof of Part (b).}
Suppose that $q^i < q \leq p^*$, which implies $q \geq \pi(q)$, and consider the following strategy of the sender,
which is described in the statement of the theorem:
\begin{itemize}
\item in period 1 reveal no information;
\item if the receiver chose A in period 1, then in period 2 reveal no information;
\item if the receiver chose R in period 1, then in period 2 split $q$ into  $\splitted{\xi(q);1}$.\footnote{In other words, if the state of nature is $G$, reveal it with probability $\pi(q)$, which is less than 1 for $q\leq p^*$, and keep silent otherwise.} 
\end{itemize}
Observe that $\xi(q)$ is decreasing in $q$. 
This is proved by considering that, with $q^i<q<q'$, we have
\begin{equation*}
  \xi(q')<\xi(q) \iff \frac{1-\pi(q)}{1-q}<\frac{\pi(q')-\pi(q)}{q'-q};
\end{equation*}
the latter inequality follows from the fact that the LHS is strictly lower -- while the RHS is always equal or higher -- than $1+\a_G+(1+\a_L)c$, which is the slope of $\pi(q)$ in the interval $(q^i,q^{ii})$.

Since $\pi(q^i)=0$ implies $\xi(q^i)=q^i$, and since $\pi(p^*)=p^*$, for any $q\in(q^i,p^*)$ we have in particular $0<\xi(q)<q^i$.
\medskip

When the receiver responds with the compliance strategy, he plays A only in period 2, and this occurs with probability $\pi(q)$. The costs to the sender is therefore $\pi(q)$, and the receiver's payoff is $\pi(q)$.

We verify that this pair of strategies is a Stackelberg equilibrium.
Since $\pi(q)$ is the 
best possible outcome for the sender, 
she cannot profit by deviating.

If the receiver deviates and plays A in period 1, he will get no information from the sender in period 2, and his optimal payoff would still be $\pi(q)$. Hence the receiver cannot profit by deviating either.
On the other hand, in period 2, when the receiver's belief is $\xi(q)$ his best response is R, because  $\xi(q)<q^i < q^m$.

\paragraph{Proof of Part (c).}
Assume $q > p^*$.
We distinguish between two cases. 

\paragraph{Case 1: $p^* > q^{ii}$.} 
Since $q > q^{ii}$, $\pi(q)$ is linear on the interval $[p^*,1]$.
Consider the following sender's strategy:
\begin{itemize}
\item in period 1, splits  $q$ into $\splitted{p^*;1}$; 
\item in period 2, follow the strategy as described in the proof of Part (b).
Namely, if the sender chose A in period 1, then provide no information in period 2.
If the sender chose R in period 1, and the split realization is $p^*$,
further split it in period 2 into $\langle 0;1\rangle$. Otherwise do nothing. 
\end{itemize}

As we have seen, the receiver's best response is as follows:
if the split realization in period 1 is to 1, follow strategy AA;
if the split realization in period 1 is to $p^*$, choose R in period 1,
and in period 2 choose A if and only if the split realization in period 2 is to 1.

%If the sender remains silent in period 1 and reveals it in the second, take the action A only in period 2. 
If the split realization in period 1 is $p^*$,
the receiver's payoff is 
\begin{equation}
\left(1-\frac{q-p^*}{1-p^*}\right)\pi(p^*)+\frac{q-p^*}{1-p^*}\pi(1)=\pi(q),
\end{equation}
where the equality is due to the
linearity of $\pi$ on this interval.
Hence, as in Part~(b), the receiver cannot profit by deviating.
The sender's expected loss under this strategy is, therefore, $\pi(q)$, and hence, as in Part~(b), the sender cannot profit by deviating as well. 

\paragraph{Case 2: $p^*\le q^{ii}$.} Consider the strategy pair introduced in Case 1. 
Since $\pi(q)$ is convex on $[p^*,1]$, the receiver's strategy is the best response to the sender's strategy. 
However, 
$\pi(q)$ is no longer linear on $[p^*, 1]$, and therefore the sender does not attain the lower bound $\pi(q)$. Hence, it is not clear whether the sender's strategy is optimal.

When the receiver's belief in period 2 is $q$,
his highest payoff is obtained when the sender reveals the state,
and he, the receiver, chooses A when the state is $G$, 
and R when it is $L$.
This results in a payoff $q$ to the receiver.

Since $q > p^*$, 
we have $\pi(q) > q$:
even ignoring information revealed by the sender,
the receiver can guarantee more than $q$.
This implies that in any equilibrium,
the sender chooses A with positive probability in period 1.

%\sout{
%Imagine a situation where the sender can not provide  information in period 1, and, conditional on the first-period action, can reveal information in period 2. We have seen that when $q \le p^*$ and subject to these conditions, the sender could guarantee the best outcome, $\pi(q)$. }

%\sout{
%When $q > p^*$, the sender can commit to a certain split of $q$ in period 2. The receiver would then act in period 2 if and only if the posterior is above $q^m$. 
%Hence, when the receiver's belief in period 2 is $p$, his payoff at that period is $\max\{0, pb-(1-p)\}$. 
%Since this function is convex, 
%the information revelation that is most beneficial to the receiver is splitting his belief between $\splitted{0,1}$,
%which results in the}%
%\footnote{\sout{Recall the function $g$ defined in Section \ref{sec: action-based}. $g$ is the concavification of $\max[0, pb-(1-p)]$. The maximum the receiver may get from splitting $q$ is therefore $g(q)=qb$.}}
%\sout{
%payoff $qb$.
%When $q > p^*$, $q<\pi(q)$,
%which implies that the receiver can obtain more than $qb$. 
%Therefore, no information provision which is restricted only to period 2 can prevent the receiver from selecting A in period 1.
%}

If the receiver chooses A in period 1, and in period 2 he chooses A only if the signal was positive, 
then the cost to the sender is:
\begin{equation}\label{eq:cost-to-sender-1}
\begin{dcases*}
    1+ q\a_G+(1-q)\a_L, &  if $p^* \le q <q^{ii}$, \\
     2, & if $q^{ii} \le q$.
\end{dcases*}
\end{equation}

%\sout{We next suppose the sender is allowed to provide information also in the first round.}

Cost minimization for the sender
involves the convexification of the function that coincides with $\pi(q)$ when $q\leq p^*$ and with \eqref{eq:cost-to-sender-1} when $q > p^*$. For $q>p^*$, this function is linear, equals $\pi(q)$ at $q=p^*$ and equals 2 at $q=1$. 
For $q<1$ this linear function lies strictly above $\pi(q)$, which implies that the optimal outcome for the sender is worse than the benchmark (see Figure~\ref{fig:1}).

The optimal messaging strategy that corresponds to this convexification is as follows:
\begin{itemize}
\item in period 1, split $q$ into $\splitted{p^*;1}$;
\item if the receiver chose A in period 1, reveal no information in period 2;
\item if the receiver chose R in period 1 and the split realization was to $p^*$,
in period 2 split the receiver's belief into $\splitted{0;1}$. 
\end{itemize}
By adopting this strategy, the sender provides some information about $q$ in period 1, and fully reveals the state in period 2. 
The proof of Theorem \ref{theorem:observe} is complete.

%%%%%%%%%%%%%%%%%%%%%%%%%%%%%%%%%%%%%%%%%%%%%%%%%%%%%%%%%
\subsection*{Proof of Theorem~\ref{theorem:signal-based}}

%Denote $u(q)= q-(1-q)c$, and recall that $u(q^m)=0$.
\paragraph{Proof of Part (a).}
Consider the following strategy of the sender:
\begin{itemize}
\item in period 1 reveal no information;
\item if the signal in period 1 is negative, reveal no information in period 2;
\item if the signal in period 1 is positive and if the state is $G$, then in period 2 with probability $\gamma$ send the message $g$, otherwise, send the message $\ell$; that is to say, split $q$ into $\splitted{\frac{(1-\gamma)q}{1-\gamma q};1}$. 
\end{itemize}

If the receiver complies (i.e., he refrains in period 1 and acts in period 2 if and only if the message is $g$), his payoff is $q\gamma$. Hence, a necessary condition for compliance to be a best response is that the resulting payoff is no less than the payoff from ignoring the sender's message:
\begin{equation}\label{eq:2-condition}
  q\gamma\ge \pi(q). 
\end{equation}
Since $\gamma \in [0,1]$,
\eqref{eq:2-condition} can hold only when $q \geq \pi(q)$,
that is, $q \leq p^*$.
As we showed in the proof of Theorem~\ref{theorem:observe},
when $q \leq p^*$ we have $\xi(q)<q^i$.
The condition~\eqref{eq:2-condition} can be equivalently expressed as $\frac{(1-\gamma)q}{1-\gamma q}\le\xi(q)$. 
It follows that, when \eqref{eq:2-condition} holds, the posterior belief in period 2 induced by the message $\ell$ is also lower than $q^i$ and, therefore, lower than $q^m$.

Consider now the alternative strategy for the receiver where he acts in period 1,
and in period 2 he acts if and only if the signal is positive and the message is $g$.
The receiver's payoff in this case is $u(q) + q\a_G\gamma
$.

Therefore, a second condition that is necessary for compliance to be a best response is $q\gamma\ge u(q) + q\a_G\gamma$, or:
\begin{equation}\label{eq:1-condition}
  q\gamma\ge \frac{u(q)}{1-\a_G}.
\end{equation}

Condition~\eqref{eq:1-condition} can be compared to \eqref{eq:2-condition} to establish which of the two is binding at each $q$. 
Namely, condition~\eqref{eq:2-condition} will be binding when
\begin{equation}\label{eq:which-binding}
\frac{u(q)}{1-\a_G} \le \pi(q),
\end{equation}
with $\gamma\le1$ implying that neither side can exceed $q$.

Recalling that $\pi(q)$ is the maximum of three functions, with one of them being $2u(q)$, we conclude that the inequality \eqref{eq:which-binding} holds at all $q$ if $u(q)/(1-\a_G)\le 2u(q)$, that is, if $\a_G\le 1/2$.
This is enough to prove part (a) of the Theorem. Indeed, if \eqref{eq:which-binding} is always satisfied and \eqref{eq:2-condition} is the only condition for compliance to be a best receiver's response, 
then there is no difference between this case and the case of action-based information discussed in Theorem~\ref{theorem:observe}.

\paragraph{Proof of Part (b).}
When $\a_G>1/2$, the identity of the condition among \eqref{eq:2-condition} and \eqref{eq:1-condition} that is binding depends on $q$ and $\gamma$. In this case, the optimal choice of $\gamma$ for the sender corresponds to the minimal $\gamma\in[0,1]$ that satisfies both inequalities~\eqref{eq:2-condition} and~\eqref{eq:1-condition}.

Before characterizing the optimal strategy, consider that, since $\gamma\leq1$, condition \eqref{eq:2-condition} cannot be satisfied when $q<\pi(q)$, i.e., when $q>p^*$.
Similarly, conditions \eqref{eq:1-condition} cannot be satisfied for $\gamma\le 1$ when $q<u(q)+\a_Gq$, i.e., when $q$ exceeds the level
\begin{equation}\label{eq:p**}
p^{**}:= \frac{c}{\a_G+c}
\ge \frac{c}{1+c} = q^m.
\end{equation}
Moreover, when $\a_G>1/2$, we have $u(q)+\a_Gq>\pi(q)$. This follows from the fact that for $q\in(q^i,q^{ii})$ 
we have
\begin{equation*}\begin{split}
  u(q)+\a_Gq-\pi(q) &= q-(1-q)c+\a_Gq - [q(1+\a_G)-(1-q)(1+\a_L)c]\\
 &=\a_L(1-q)c>0,\end{split}
\end{equation*}
while for $q\ge q^{ii}$ we have
\begin{equation*}\begin{split}
  u(q)+\a_Gq-\pi(q) &= (1+\a_G)q-(1-q)c - 2[q-(1-q)c]\\
  &=(1-q)c-(1-\a_G)q>0.
\end{split}
\end{equation*}
Therefore, 
\begin{equation}
\label{equ:65}
p^{**}=u(p^{**})+\a_Gp^{**}>\pi(p^{**}), 
\end{equation}
which implies $p^{**}<p^*$.

\medskip

We distinguish between three cases.

\paragraph{Case 1: $q \le p^{**}$.}

Because in condition \eqref{eq:which-binding} the LHS is linear, the RHS is convex, the LHS is lower than the RHS at  $q=0$, while the RHS is lower than the LHS at $q=p^{**}$ (see \eqref{equ:65}), 
there is $p^e \in (0,p^{**})$ such that for $q < p^e$ condition~\eqref{eq:2-condition} is the binding constraint,  while for $q> p^e$ condition~\eqref{eq:1-condition} is the binding constraint.
Since at $q^m$ the LHS of \eqref{eq:which-binding} is zero while the RHS is positive, we have $p^e>q^m$.  
Since $u(q^{ii})/(1-\a_G)> 2u(q^{ii}) = \pi(q^{ii})$, we have $p^e<q^{ii}$.

By providing information only in period 2, the best the sender can do is as
follows.
\begin{itemize}
\item When $q\le p^e$, condition~\eqref{eq:2-condition} is the binding constraint.  In this case the receiver's payoff coincides with the action-based information payoff, which is $\pi(q)$, and the sender's cost is therefore $\pi(q)$.
\item When $p^e <q \le p^{**}$, the sender can still prevent the receiver from acting in period 1, but then the receiver's payoff coincides with the RHS of condition~\eqref{eq:1-condition}, which is $u(q)/(1-\a_G)$, and the sender's cost is accordingly $u(q)/(1-\a_G)$.
\end{itemize}

\paragraph{Case 2: $p^{**} < q \le q^{ii }$.} When $p^{**} < q^{ii}$, no amount of information revealed in period 2 makes action R better than action A in period 1, hence the receiver will choose A in period 1.
Since the sender observes the signal obtained by the receiver,
she knows the receiver's belief in period 2 (calculated under the assumption that the receiver acted in period 1).
Since $q \leq q^{ii}$,
if the signal in period 1 is negative, the receiver's belief in period 2 is at most $q^m$, and the best response for the sender in period 2 is to reveal no information.
If the signal in period 1 is positive, the receiver's belief in period 2 is above $q^m$, and the best response for the sender in period 2 is to split the receiver's belief between $\splitted{q^m;1}$.
As a result, the sender's optimal cost is 
\begin{equation}
\label{equ:66}
1 + \big(q\a_G+(1-q)\a_L\big)\times \left(\frac{q^{+}-q^m}{1-q^m}\right),
\end{equation}
where, as in \eqref{equ:64},
\begin{equation}\label{eq:q+}
q^{+}= \frac{q\a_G }{q\a_G +(1-q)\a_L}    
\end{equation}
is the receiver's posterior belief after a positive signal. The expression in \eqref{equ:66} simplifies to
\begin{equation}\label{eq:simplified-payoff-case-2.3}
    1+q \alpha _G- (1-q) \alpha _L c.
\end{equation}
which is larger than $\pi(q)$ on $(p^{**},q^{ii}]$.

%\MAX{[The way the cases are specified leaves some ambiguity when $q^{ii}<q<p^{**}$: does case 2.1 or 2.3 applies? Case 2.3 should also include the condition $q>p^{**}$ or be replaced by $\max\{q^{ii},p^{**}\}<q\le 1$. This applies also to the definition on $N(q)$ below]}

\paragraph{Case 3: $q>\max\{p^{**},q^{ii}\}$.}
In this case, the receiver chooses A in period 1, and his belief in period 2 is above $q^m$, whatever signal he obtains from his action in period 1.
Hence the best response for the sender in period 2 is to split the receiver's belief into $\splitted{q^m;1}$,
whether the signal in period 1 is positive or negative. Doing so, leads to sender's cost of $1 + \frac{q-q^m}{1-q^m}$,
which is larger than $\pi(q)$ for $q>\max\{p^{**},q^{ii}\}$.

%\sout{
%The reason is that in this range, whatever the posterior after period 1 is (whether after detection or after non-detection), the incentive of the receiver is to act in period 2, if no further information is given to him. 
%Thus, the optimal messaging strategy of the sender is to split the posterior, whatever it is, between $\splitted{q^m;1}$. 
%However,  due to linearity, this is equivalent to the split of the prior $q$ between $\splitted{q^m;1}$, yielding $\frac{q-q^m}{1-q^m}b$ as the receiver's payoff in period 2. In other words, when $ q^{ii }< q $, 
%the optimal messaging strategy (again, when the sender can provide information only in period 2) is to provide information both when there was no detection and when there was. } 

To summarize, 
when $\a_G > 1/2$, 
the sender's cost when information can be provided only in period 2 is given by (see Figure~\ref{fig:cases-p**}):
\begin{equation}
\label{equ:C}
\begin{dcases*}
   \pi(q), & if $q\le p^e$, \\
 \dfrac{u(q)}{1-\a_G}, & if $p^e <q \le p^{**}$,\\
    1+q \alpha _G- (1-q) \alpha _L c,  & if $p^{**}< q \le q^{ii }$,\\
   1+\frac{q-q^m}{1-q^m}, & if $\max\{p^{**},q^{ii}\} <q \le 1$.
  \end{dcases*}
\end{equation}
Note that, when $p^{**} > q^{ii}$, the third subcase in the definition of \eqref{equ:C} is empty.

\begin{figure}
\makebox[\textwidth][c]{
\begin{minipage}{.6\textwidth}
\centering
\includegraphics[width=\linewidth]{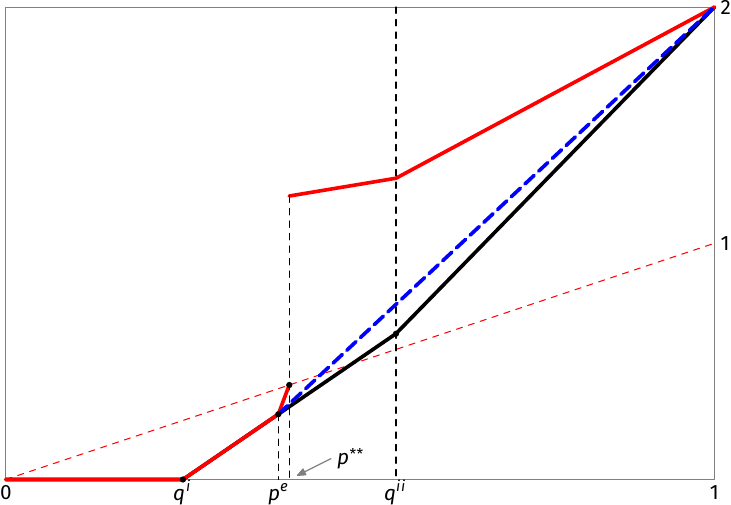}
(a) The case $p^{**}<q^{ii}$
\end{minipage}
\hspace{2mm}
\begin{minipage}{.6\textwidth}
\centering
\includegraphics[width=\linewidth]{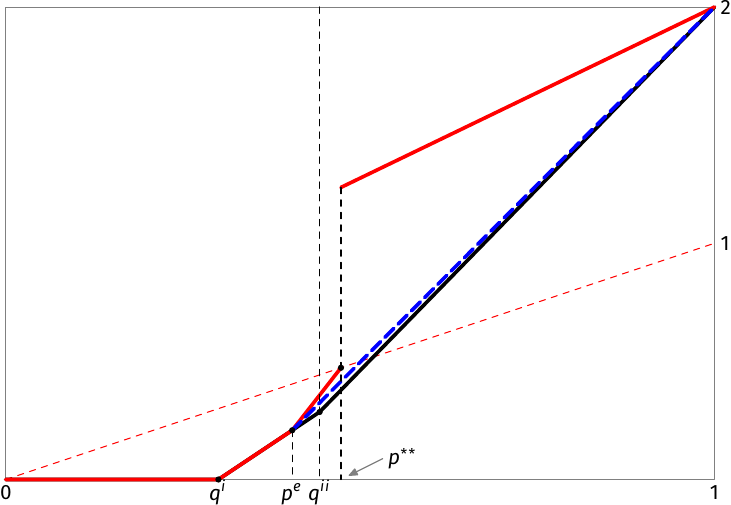}
(b) The case $p^{**}>q^{ii}$
\end{minipage}}
\caption{Sender's payoff in the proof of Theorem~\ref{theorem:signal-based}, with kink points at  $q^{i}$ and $q^{ii}$.}
\label{fig:cases-p**}
\end{figure}

Since the sender wishes to minimize her cost, her Stackelberg equilibrium payoff is  the convexification of the function defined in \eqref{equ:C}.
Tedious computations show that the convexification is linear between $p^e$ and 1,
and has a kink point at $p^e$. 
Therefore, when $q>p^e$ the optimal messaging strategy is as follows:
\begin{itemize}
\item in period 1, split $q$ into $\splitted{p^e;1}$;
\item if the split realization is $p^e$, further split $p^e$ into
  $\splitted{\xi(p^e);1}$, and do nothing otherwise.
\end{itemize}
Notice that this optimal strategy renders $p^{**}$ irrelevant. 

%\ignore{
%As long as  $u(q)\le 0$, namely when $q\le q^m$, and even a bit higher, Eq.\ \eqref{eq:1-condition} is vacuous.
%Eq.\ \eqref{eq:2-condition} remains the only inequality that $v$ should satisfy in order to prevent the receiver from action in period 1. However, this is precisely the condition applied to action-based strategy. We conclude that the action-based messaging strategy coincides with the detection-base strategy as long as
%the RHS of Eq.\ \eqref{eq:1-condition} is smaller than that of Eq.\ \eqref{eq:2-condition}. 

\ignore{
\color{red}
/// Eilon: what is the role of the following paragraph? It repeats text we already wrote. I suggest deleting the rest of the text in this case.
\color{black}

When Eq.\ \eqref{eq:1-condition} becomes equality, this is where the information provided to the receiver makes him indifferent between acting and refraining in period 1.  At this level, $v(q):=\frac{u(q)}{(1-\a_G)q}$. Denote by $p^{**}$ the level where  $v(p^{**})=1$. This is the highest prior where information disclosure by the sender only in period 2 can prevent the receiver from action in the first. Beyond this level, an informational supplement is required in period 1.

\rednote{$u(p^{**})= {(1-\a_G)p^{**}}$. Need to show that 
$u(p^{**})/ (1-\a_G)>\pi(p^{**})$.} \MAX{[DONE]}
\color{black}

When we plug $v(q)$ into the LHS of Eq.\ \eqref{eq:2-condition} and look for the prior $q$ that makes the two sides equal, 
we obtain the equation,
\begin{equation}\label{eq:3-condition}
  \frac{u(q)}{1-\a_G}= u(q) + (q\a_G - (1-q)\a_L c) +\max[0,q(1-\a_G) - (1-q)(1-\a_L)c]  
\end{equation}
Let $p^e$ be the unique solution of this equation. That is,  $p^e$ is the threshold below which the  optimal message-based information strategy coincides with the  optimal action-based information strategy. That is, below  $p^e$ it is better for the receiver to ignore the sender's provided information, while above it, the receiver is better off not ignoring it. \MAX{[This has already been said, hasn't it? I would drop the paragraph above too.]}}

\paragraph{General sender's strategies}

So far we have assumed that the  sender can use only extreme strategies.
Since in all Stackelberg equilibria,
the receiver refrains whenever her belief is not 1,
the same argument as in the proof of Proposition~\ref{theorem: unconditional} shows that all sender's strategies are weakly dominated by extreme strategies.
This completes the proof. \qed

\ignore{
where, in period 2, the receiver's belief is split into $\splitted{q';1}$, with $0\le q'<q$ (we will call it an ``extreme split'')
\sout{from $(\sigma_S^v)_{v \in [0,1]}$}. We now show that no other strategy is better for the sender than the best strategy in this collection.

The proof is similar to the proof of the analogous part in Proposition~\ref{theorem: unconditional}.

\sout{Call the splits generated by strategies in $(\sigma_S^v)_{v \in [0,1]}$ an \emph{extreme split}: a split into some belief $q' < q$ and 1.}

\sout{Let $\sigma_S$ be}\color{black}
 Consider a sender's strategy that reveals no information in period 1, and in period 2 splits the receiver's belief to several posteriors (rather than an extreme split $\splitted{q';1}$ \sout{as the strategies $(\sigma_S^v)_{v \in [0,1]}$}), where the posterior is calculated under the assumption that the receiver refrains in period 1.
Denote these posteriors $(q_i)_{i=1}^n$ with corresponding probabilities $(\b_i)_{i=1}^n$.
We necessarily have $\sum_{i=1}^n\b_iq_i=q$ and  $\sum_{i=1}^n\b_i=1$. 
Let $\sigma_R$ be the receiver's strategy that refrains in period 1, and acts in period 2 only if the receiver's posterior is above $q^m$.
Denoting $\psi(q):=\max\{0, u(q)\}$, the analogs of Eqs. \eqref{eq:2-condition} and  \eqref{eq:1-condition} are
\begin{equation}\label{eq:4-condition}
 \sum_{i=1}^n \b_i \psi(q_i) \ge  \pi(q),
\end{equation}
and
\begin{equation}\label{eq:5-condition}
   \sum_{i=1}^n \b_i \psi(q_i) \ge u(q) + (q\a_G +(1-q)\a_L) \sum_{i=1}^n \b_i \psi(q_i)b.
\end{equation}

\AVI{PLEASE CHECK AGAIN THE ABOVE EQUATION}

\bigskip

The term $\sum_{i=1}^n \b_i \psi(q_i)$ appears on both sides of Eq.~\eqref{eq:5-condition}, which implies that if a general split satisfies Eq.~\eqref{eq:5-condition}, then due to the convexity of $\psi$, an extreme split will satisfy it as well. As for Eq.\ \eqref{eq:4-condition}, here $\sum_{i=1}^n \b_i \psi(q_i)$ appears only on the LHS. Since $\psi$ is convex, if a general split satisfies Eq.~\eqref{eq:4-condition}, an extreme split will satisfy it as well. We conclude that if in period 1 action R is better than action A when the sender follows $\sigma_S$, then action R is better than action A when the sender follows $\sigma_S^v$ for some $v \in [0,1]$.

If one of the Equations~\eqref{eq:4-condition} or~\eqref{eq:5-condition} does not hold, then by revealing no information in period 1, the sender cannot deter the receiver from acting in period 1.  When the posterior belief of the receiver in period 2 is calculated under the assumption that the receiver acted in period 1 (and obtained the signal observed by the sender), a further extreme split into $\splitted{q^m;1}$ improves the sender's payoff.

It follows that we can assume w.l.o.  g.~that if $q_i > q^m$, then $q_i$ must equal 1.  Regarding beliefs $\{q_i \colon q_i \leq q^m\}$, define $q'=\sum_{\{i \colon q_i \le q^m \}}\b_iq_i / \sum_{\{i \colon q_i \le q^m \}}\b_i$.  Then $\sigma_S$ is not better than the strategy that splits the belief in period 2 between $\splitted{q';1}$ when the signal is positive.}
%To complete the proof, we demonstrate that an extreme split is optimal for the sender. Suppose that there exists a posterior $q_i$, such that $q^m < q_i < 1$. In this case, the sender incurs a cost of 1. 
%However, an extreme split reduces this cost, for instance by further splitting $q_i$ into $\splitted{q^m;1}$. Therefore, in an optimal messaging strategy, if $q^m < q_i$, then $q_i$ must equal 1. Moreover, let $q'=\sum_{\{i \colon q_i \le q^m \}}\b_iq_i$. By gathering all $q_i$'s such that $q_i \le q^m$ into $q'$, we obtain $q'\le q^m $, which can be obtained as a posterior with probability $\sum_{\{i \colon q_i \le q^m \} }\b_i$. In other words, if all posteriors $q_i \le q^m$  are replaced by $q'$, the payoffs of the players does not change. Thus, any sender's strategy is dominated by a strategy from $(\sigma_S^v)_{v \in [0,1]}$.

%\qed

\end{document}